\title{On linearization coefficients of $q$-Laguerre polynomials}
\author{Byung-Hak Hwang}
\address{Department of Mathematics, Seoul National University, Seoul,
South Korea}
\email{xoda@snu.ac.kr}
\author{Jang Soo Kim}
\address{
Department of Mathematics, Sungkyunkwan University, Suwon,
South Korea}
\email{jangsookim@skku.edu}
\author{Jaeseong Oh}
\address{Department of Mathematics, Seoul National University, Seoul,
South Korea}
\email{jaeseong\_oh@snu.ac.kr}
\author{Sang-Hoon Yu}
\address{Department of Mathematics, Seoul National University, Seoul,
South Korea}
\email{ysh4017@snu.ac.kr}
\date{\today}
\date{\today}
\newtheorem{thm}{Theorem}[section]
\newtheorem{lem}[thm]{Lemma}
\newtheorem{prop}[thm]{Proposition}
\theoremstyle{definition}
\newtheorem{exam}[thm]{Example}
\newtheorem{defn}[thm]{Definition}
\newtheorem{remark}[thm]{Remark}
\newcommand\CR{\operatorname{cross}}
\newcommand\LL{{\mathcal{L}}}
\newcommand\cross{{\operatorname{cr}}}
\newcommand\ov{{\operatorname{ov}}}
\newcommand\wex{{\operatorname{wex}}}
\newcommand\bind{\operatorname{bindex}}
\newcommand\bdiff{\operatorname{bdiff}}
\newcommand\bwex{{\operatorname{bwex}}}
\newcommand\ee{\operatorname{e}}
\newcommand\M{{\operatorname{M}}}
\newcommand\PM{{\operatorname{PM}}}
\newcommand\bwt{\operatorname{bwt}}
\newcommand\wt{\operatorname{wt}}
\newcommand{\mm}{\mathfrak{m}}
\newcommand{\DD}{\mathcal{D}}
\newcommand{\ID}{\operatorname{Id}}
\tikzset{
    marked/.style={densely dashed, draw=black},
    marked0/.style={densely dashed, draw=gray!80},
    unmarked/.style={ draw=black},
    block/.style={fill=gray!15},
}
\begin{document}

\begin{abstract}
The linearization coefficient $\mathcal{L}(L_{n_1}(x)\dots L_{n_k}(x))$ of classical Laguerre polynomials $L_n(x)$ is known to be equal to the number of $(n_1,\dots,n_k)$-derangements, which are permutations with a certain condition. Kasraoui, Stanton and Zeng found a $q$-analog of this result using $q$-Laguerre polynomials with two parameters $q$ and $y$. Their formula expresses the linearization coefficient of $q$-Laguerre polynomials as the generating function for $(n_1,\dots,n_k)$-derangements with two statistics counting weak excedances and crossings. In this paper their result is proved by constructing a sign-reversing involution on marked perfect matchings.
\end{abstract}

\maketitle

\section{Introduction}

A family of polynomials $P_n(x)$ are called \emph{orthogonal polynomials} with
respect to a linear functional $\LL$ if $\deg P_n(x)=n$ for $n\ge0$ and
$\LL(P_m(x)P_n(x)) = 0$ if and only if $m\ne n$. The $n$th \emph{moment}
$\mu_n$ of the orthogonal polynomials is defined by $\mu_n = \LL(x^n)$. It is
well known that monic orthogonal polynomials $P_n(x)$ satisfy a three-term
recurrence of the form
\begin{equation}
  \label{eq:3}
P_{n+1}(x) = (x-b_n) P_n(x) - \lambda_n P_{n-1}(x).  
\end{equation}

Viennot \cite{ViennotLN} developed a combinatorial theory to study orthogonal
polynomials. In particular, he showed that orthogonal polynomials $P_n(x)$ and
the moments $\mu_n$ are expressed as weighted sums of certain lattice paths.
There are several classical orthogonal polynomials whose moments have simple
combinatorial meanings. For example, the $n$th moment of the Hermite
(respectively, Charlier and Laguerre) polynomials is the number of perfect
matchings (respectively, set partitions and permutations) on
$[n]:=\{1,2,\dots,n\}$.

By definition of orthogonal polynomials, it is easily seen that 
\[
  P_m(x)P_n(x) = \sum_{\ell=0}^{m+n} c^{\ell}_{m,n} P_\ell(x), \qquad
 c^{\ell}_{m,n} = \LL(P_\ell(x)P_m(x)P_n(x))/\LL(P_\ell(x)^2).
\]
Thus the coefficients $c^{\ell}_{m,n}$ can be computed using the quantities
$\LL(P_{n_1}(x)\dots P_{n_k}(x))$. We call $\LL(P_{n_1}(x)\dots P_{n_k}(x))$ a
\emph{linearization coefficient}.

For the above mentioned classical orthogonal polynomials, the linearization
coefficients also have nice combinatorial interpretations as follows. Let $n_1, n_2, \dots n_k$ be positive integers with $n=n_1+\cdots+n_k$, and consider the set $I_i$ ($i=1,2,\dots,k$) of consecutive integers from $n_1+\cdots+n_{i-1}+1$ to $n_1+\cdots+n_i$, where $n_0=0$. If $P_n(x)$ are the
Hermite (respectively, Charlier and Laguerre) polynomials, then
$\LL(P_{n_1}(x)\dots P_{n_k}(x))$ is the number of inhomogeneous perfect matchings (respectively, set partitions and permutations) on
$I_1\sqcup\dots\sqcup I_k=[n]$, see \cite{CKS,Even1976,FZ84, Foata1988, Zeng1992}
and references therein. Here, a perfect matching $\mm$ (respectively, set partition
$\pi$ and permutation $\sigma$) is \emph{inhomogeneous} if there are no edges
(respectively, two elements in the same block and two elements $j$ and
$\sigma(j)$) that are contained in the same set $I_i$.

There are $q$-analogs of the above combinatorial formulas for linearization
coefficients of Hermite, Charlier and Laguerre polynomials due to Ismail,
Stanton and Viennot \cite{ISV}, Anshelevich \cite{Anshelevich} and Kasraoui,
Stanton and Zeng \cite{Kasraoui2011}, respectively. There is a unified way to
prove combinatorial formulas for linearization coefficients using so called ``separation of variables" \cite{IKZ13}. We refer the reader to the survey \cite{CKS} for more details on these linearization coefficients.

Suppose that $P_n(x)$ are orthogonal polynomials whose moments $\LL(x^n)$ have a
combinatorial model as in the case of Hermite, Charlier or Laguerre polynomials.
Since $P_n(x)$ satisfy a simple recurrence \eqref{eq:3}, one may also give a
combinatorial model for $P_n(x)$ with possibly negative signs involved. These
combinatorial models for $P_n(x)$ and $\LL(x^n)$ naturally yield a combinatorial
meaning to $\LL(P_{n_1}(x)\dots P_{n_k}(x))$, which may have negative signs.
Therefore, if there is a combinatorial formula for $\LL(P_{n_1}(x)\dots
P_{n_k}(x))$ with only positive terms, the most satisfying combinatorial proof
of this formula would be finding a sign-reversing involution on the
combinatorial models for $\LL(P_{n_1}(x)\dots P_{n_k}(x))$ whose fixed points
give the positive terms in the formula.

Indeed, the formulas for linearization coefficients of $q$-Hermite \cite{ISV}
and $q$-Charlier polynomials \cite{Anshelevich} have been proved in this way by
Ismail, Stanton and Viennot \cite{ISV} and Kim, Stanton and Zeng \cite{KSZ}.
However, such a proof is missing in the case of $q$-Laguerre polynomials. In
this paper, we prove the formula for linearization coefficients of
$q$-Laguerre polynomials due to Kasraoui, Stanton and Zeng \cite{Kasraoui2011} by
finding a sign-reversing involution. We now describe their result below. 

The \emph{$q$-Laguerre polynomials} $L_{n}(x;q,y)$ are defined by the
three-term recurrence relation
\begin{equation} \label{eq:general_recur}
L_{n+1}(x;q,y) = (x-y[n+1]_q-[n]_q) L_{n}(x;q,y) - y[n]^2_q L_{n-1}(x;q,y)
\end{equation}
with $L_0(x;q,y) = 1$ and $L_1(x;q,y) = x-y$. Here, we use the notation $[n]_q =
1+q+\dots+q^{n-1}$. From now on $\LL$ denotes the linear functional with respect to which
the $q$-Laguerre polynomials are orthogonal.

The set of permutations of $[n]$ is denoted by $S_n$. For $\sigma\in S_n$, a
\emph{weak excedance} of $\sigma$ is an integer $i\in[n]$ such that
$\sigma(i)\ge i$. A \emph{crossing} of $\sigma$ is a pair $(i,j)$ of integers
$i,j\in[n]$ such that $i<j\le\sigma(i)<\sigma(j)$ or $\sigma(i)<\sigma(j)<i<j$.
We denote by $\wex(\sigma)$ (respectively, $\CR(\sigma)$) the number of weak
excedances (respectively, crossings) of $\sigma$. For positive integers
$n_1,\dots,n_k$ and $N=n_1+\dots+n_k$, an
\emph{$(n_1,\dots,n_k)$-derangement} is a permutation $\sigma\in
S_{N}$ such that there is no integer $i\in[N]$ with
\[
n_1+\dots+n_{j-1}+1 \le i,\sigma(i)\le n_1+\dots+n_j
\]
for some $j\in [k]$.
The set of $(n_1,\dots,n_k)$-derangements is denoted by $\DD(n_1,\dots,n_k)$.

Kasraoui, Stanton and Zeng
\cite{Kasraoui2011} showed that the $n$th moment is given by
\begin{equation}
  \label{eq:mu1}
\mu_n(q,y) = \LL(x^n) = \sum_{\sigma\in S_n} y^{\wex(\sigma)} q^{\CR(\sigma)}.
\end{equation}
They also proved the following formula for the linearization coefficients of
$q$-Laguerre polynomials.

\begin{thm}$($\cite{Kasraoui2011}$)$
\label{thm:Main}
The linearization coefficients of $q$-Laguerre polynomials are given
by 
$$\mathcal{L}(L_{n_1}(x;q,y)\cdots L_{n_k}(x;q,y))= \sum_{\sigma\in \mathcal{D}
  (n_1,\dots,n_k)} y^{\wex(\sigma)}q^{\CR(\sigma)}.$$
\end{thm}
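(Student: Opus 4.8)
The plan is to imitate the strategy that works for $q$-Hermite and $q$-Charlier polynomials (\cite{ISV,KSZ}): produce a \emph{signed} combinatorial model for $\LL(L_{n_1}(x;q,y)\cdots L_{n_k}(x;q,y))$ and then cancel the terms of the wrong sign by an explicit involution, the surviving terms reproducing $\sum_{\sigma\in\DD(n_1,\dots,n_k)}y^{\wex(\sigma)}q^{\CR(\sigma)}$.

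\emph{First}, I would expand the recurrence \eqref{eq:general_recur} to obtain a closed combinatorial formula for $L_n(x;q,y)$. By Viennot's theory $L_n(x;q,y)$ is a weighted sum of Motzkin-type paths of length $n$, and expanding the step weights $y[h+1]_q+[h]_q$ and $y[h]^2_q$ into monomials turns these paths into partial permutations $f$ of $[n]$, so that $L_n(x;q,y)=\sum_f(-1)^{|f|}y^{\wex(f)}q^{\cross(f)}\,x^{\,n-|f|}$, where $|f|$ is the number of arcs of $f$ and $\wex,\cross$ are the weak-excedance and crossing statistics of the arc diagram of $f$; the sign $(-1)^{|f|}$ is contributed by the $-y[n]^2_q$ terms. (At $q=y=1$ this is the familiar $\sum_j(-1)^j\binom{n}{j}^2 j!\,x^{n-j}$.)

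\emph{Second}, multiply the $k$ such expansions and apply $\LL$ using the moment formula \eqref{eq:mu1}, $\LL(x^m)=\sum_{\tau\in S_m}y^{\wex(\tau)}q^{\CR(\tau)}$. A tuple $(f_1,\dots,f_k)$ of partial permutations of the blocks $I_1,\dots,I_k$ together with $\tau$ assembles into a permutation of $[N]$, and the arcs supplied by the $f_i$ all lie within a single block. The delicate point is that the crossing statistic does not add up under this assembly — arcs of $\tau$ cross arcs of the $f_i$ in ways not recorded by $q^{\CR(\tau)}$ — so I would pass from permutations of $[N]$ to \emph{marked perfect matchings} on the doubled ground set (each $i\in[N]$ split into two vertices), for which $\wex$ and $\CR$ become genuinely additive matching statistics and the arcs coming from the $f_i$ become the marked edges. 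The result is an identity of the shape
$$\LL(L_{n_1}(x;q,y)\cdots L_{n_k}(x;q,y))=\sum_{(\mm,B)}(-1)^{|B|}\,y^{\bwex(\mm)}q^{\cross(\mm)},$$
the sum over perfect matchings $\mm$ of the doubled ground set and subsets $B$ of the within-block edges of $\mm$.

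\emph{Third}, $\sum_{B}(-1)^{|B|}=0$ unless $\mm$ has no within-block edge, so after cancellation only matchings encoding $(n_1,\dots,n_k)$-derangements survive, with $\bwex,\cross$ restricting to $\wex,\CR$; concretely the cancellation is realized by the involution that toggles membership of the first within-block edge (in a fixed linear order) in $B$. \emph{The main obstacle is the identity of the second step}: one must choose the doubling and the order in which the matching is read so that every crossing of the assembled object is correctly produced — the standard crossing/nesting exchange symmetry of the $q$-weight is the mechanism, but here it must be run in the presence of the extra parameter $y$ and of deficiency arcs (rather than only nesting arcs as for $q$-Hermite), and one must simultaneously ensure that toggling an edge in or out of $B$ changes neither $\bwex(\mm)$ nor $\cross(\mm)$. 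Pinning down the definition of the marked matchings and of the linear order so that both requirements hold is where essentially all the work lies.
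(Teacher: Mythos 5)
Your overall skeleton (a signed matching expansion of $L_n$, assembly into marked perfect matchings, cancellation by a sign-reversing involution) is the same as the paper's, but the identity on which your cancellation rests is false as stated, and that is exactly where the real content lies. In your second step you write the linearization coefficient as $\sum_{(\mm,B)}(-1)^{|B|}y^{\bwex(\mm)}q^{\cross(\mm)}$ with exponents depending on the perfect matching $\mm$ alone, not on the set $B$ of marked (within-block) edges. No such formula can hold: in the correct expansion (the paper's \eqref{eq:cancnotfree}) the statistics are computed from the pair $(\pi,S)$, because the blocks that define $\bwex$ and the weight are delimited by the endpoints of the \emph{marked} edges, and crossings count $+1$ between two unmarked edges, $-1$ between two marked edges, and $0$ for mixed pairs. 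Concretely, take a single block $n_1=2$ and $\pi(1)=\overline{2}$, $\pi(2)=\overline{1}$: the four markings contribute $+y^2q$, $-y$, $-y^2q$, $+y$, not $(-1)^{|B|}$ times one fixed monomial. Consequently the binomial cancellation $\sum_B(-1)^{|B|}=0$ is not available, and your proposed involution --- toggle the first within-block edge in a fixed linear order --- is not weight-preserving (in the example, toggling $e_1$ in the empty marking sends $+y^2q$ to $-y$).

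You do flag at the end that one must ensure toggling changes neither $\bwex$ nor the $q$-exponent, but you treat this as a matter of ``pinning down definitions,'' whereas it cannot be arranged by any fixed linear order: the edge to be toggled must be chosen adaptively from the current marking. This adaptive choice is the heart of the paper's proof: one toggles the homogeneous edge with leftmost lower endpoint when every homogeneous edge has nonnegative block difference, and otherwise either the leftmost homogeneous edge $e_i$ of negative block difference (when it is ``convertible,'' meaning all edges crossing it from the left have $\bdiff\ge 0$, resp.\ $>0$, and from the right $\le -1$, resp.\ $<-1$) or, failing that, the rightmost edge of block difference $0$ crossing $e_i$ from the left. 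Proving that this choice exists, is an involution, and preserves $\bwex$ and $\wt+\cross$ (the $q$-statistic is $\wt(\mm)+\cross(\mm)$, not $\cross(\mm)$ alone) takes the paper's Definition~\ref{Involution} and Lemmas~\ref{lem:i_is_marked}--\ref{lem:wt+cr}; none of this is supplied or replaceable by your shortcut, so the proposal has a genuine gap at its central step. (A smaller discrepancy: the paper's expansion of $L_n$ uses the block statistics $\bwex$ and $\bwt+\cross$, and the moment must be rewritten as $y^{\wex}q^{\wt-\cross}$ via Lemma~\ref{lem:ov_wt_cr} before the statistics become additive over the marked/unmarked decomposition; your plain $\wex$/$\cross$ version of $L_n$ is not substantiated.)
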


In \cite{Kasraoui2011} using recurrence relation for $\mathcal{L}(L_{n_1}(x;q,y)\cdots L_{n_k}(x;q,y))$ and induction, they proved Theorem~\ref{thm:Main}.
The purpose of this paper is to give a proof of Theorem~\ref{thm:Main} by
constructing a sign-reversing involution. Our fundamental combinatorial objects
are matchings instead of permutations.

The remainder of this paper is organized as follows. In
Section~\ref{sec:Laguerre} we give basic definitions and
combinatorial interpretations for $L_n(x;q,y)$ and $\mu_n(q,y)$ using matchings
and perfect matchings. In Section~\ref{sec:line-coeff-sign} we give a
combinatorial model for the linearization coefficient in terms of marked perfect
matchings. We then construct a sign-reversing involution on marked perfect
matchings. Section~\ref{sec:proofinvolution} is devoted to showing that our map in
Section~\ref{sec:line-coeff-sign} is indeed a sign-reversing involution that preserves
the desired weights on marked perfect matchings.
In the final section we discuss future work. 

An extended abstract of this paper will appear in the Proceedings of the 32nd
Conference on Formal Power Series and Algebraic Combinatorics.


\section{$q$-Laguerre polynomials and their moments} \label{sec:Laguerre}

In this section we give combinatorial interpretations for the $q$-Laguerre
polynomials \linebreak[4] $L_n(x;q,y)$ and their moments $\mu_n(q,y)$ using matchings and
perfect matchings. The results in this section generalize the combinatorial
models for Laguerre polynomials and their moments due to Viennot
\cite[Ch.~6]{ViennotLN}. We start with basic definitions.

\begin{defn}
  Let $K_{n,n}$ be the complete bipartite graph with $2n$ vertices, i.e., the
  graph with vertex set $\{1,2,\dots,n,\overline{1},\overline{2}, \dots,
  \overline{n}\}$ and edge set $\{(i,\overline{j}): 1\le i,j\le n\}$. A
  \emph{matching} of degree $n$ is a subgraph $\pi$ of $K_{n,n}$ such that $\pi$
  contains every vertex of $K_{n,n}$ and no two distinct edges of $\pi$ have
  common vertices. A matching $\pi$ of degree $n$ is called a \emph{perfect
    matching} if $\pi$ has exactly $n$ edges. Denote the set of all matchings
  (respectively, perfect matchings) of degree $n$ by $\M_n$ (respectively,
  $\PM_n$). For $\pi\in \M_n$, we denote by $E(\pi)$ the set of edges in $\pi$
  and let $e(\pi)=|E(\pi)|$.
\end{defn}

We visualize a matching $\pi$ of degree $n$ by placing the vertices
$1,2,\dots,n$ in the upper row and the vertices
$\overline{1},\overline{2},\dots,\overline{n}$ in the lower row as shown in
Figure~\ref{fig:matching}. We call $1,2,\dots,n$ the \emph{upper vertices} and
$\overline{1},\overline{2},\dots,\overline{n}$ the \emph{lower vertices} of
$\pi$. If there is no possible confusion, we will simply write $j$ instead of
$\overline{j}$. For example, since every edge of a matching is of the form
$(i,\overline{j})$, we will also write this edge as $(i,j)$.

\begin{figure}[H]
    \centering
    \resizebox{0.6\linewidth}{!}{
    \begin{tikzpicture}
    \tikzmath{\m = 0.15;\mh=0.3;\Mv=0.2;\Mh=0.45;\Mp=8.5;};


    \draw[unmarked] (1,1) -- (4,0);
    \draw[unmarked] (2,1) -- (6,0);
    \draw[unmarked] (3,1) -- (2,0);
    \draw[unmarked] (5,1) -- (1,0);
    \draw[unmarked] (7,1) -- (3,0);


\foreach \x   in {1,...,7}
{
    \fill (\x,1) circle (1pt);
    \fill (\x,0) circle (1pt);
    \node[scale=0.8] at (\x,1.3) {$\x$};
    \node[scale=0.8] at (\x,-0.3) {$\overline{\x}$};
    }
\end{tikzpicture}
	}   
    \caption{A matching $\pi$ of degree 7, which is not a perfect matching.} \label{fig:matching}
\end{figure}

For $\pi\in \M_n$, if $(i,j)\in \pi$, we denote $\pi(i) = j$ and
$e_i=(i,\pi(i))$. For example, if $\pi$ is the matching in
Figure~\ref{fig:matching}, then $\pi(1)=4$, $\pi(3)=2$ and $e_1=(1,4)$, $e_3=(3,2)$.
An upper vertex $i$ of $\pi$ is said to be \emph{unmatched} if there is no edge
of the form $(i,j)$. Similarly, a lower vertex $j$ of $\pi$ is \emph{unmatched}
if there is no edge of the form $(i,j)$. Note that if $\pi\in\PM_n$, there are
no unmatched vertices and we can identify $\pi$ with the permutation $\sigma\in
S_n$ given by $\sigma(i)=\pi(i)$ for all $i\in[n]$. We will often use this
identification in this paper.

Let $\pi\in \PM_n$. An edge $e=(i,\pi(i))$ of $\pi$ is called a \emph{weak
  excedance} if $i\le \pi(i)$. A pair $(e, e'')$ of edges $e=(i,\pi(i))$ and
$e'=(j,\pi(j))$ is said to be \emph{overlapping} if $i<j\le\pi(i)<\pi(j)$ or
$\pi(i)<\pi(j)<i<j$. Let $\wex(\pi)$ and $\ov(\pi)$ denote the number of weak
excedances and overlapping pairs of $\pi$. In other words,
\begin{align*}
\wex(\pi) &= | \{ i\in [n] : \pi(i)\ge i\} | \quad\mbox{and} \\
\ov(\pi) &= | \{ (i, j) \in [n]\times [n] : i<j\le\pi(i)<\pi(j)\mbox{ or }\pi(j)<\pi(i)<j<i \}|.
\end{align*}
By the identification of $\PM_n$ and $S_n$ we can rewrite \eqref{eq:mu1} as
follows:
\begin{equation} \label{eq:general_moment1}
\mu_n(q,y) = \LL(x^n) = \sum_{\pi\in \PM_n} y^{\wex(\pi)} q^{\ov(\pi)}.
\end{equation}

For the remainder of this section we will find a combinatorial model for
$L_n(x;q,y)$ in Theorem~\ref{thm:Laguerre} and give yet another expression for
$\mu_n(q,y)$ in \eqref{eq:general_moment2}. To do this, we define some
statistics for matchings. Given a matching $\pi\in \M_n$, let
$P=(B_1,\dots,B_l)$ be the unique ordered set partition of the upper vertices of
$\pi$ satisfying the following conditions:
\begin{itemize}
\item Each block $B_r$ consists of consecutive elements. In other words, $B_r$ is of the form $B_r=\left\{i,i+1,\dots, j\right\}$.
\item For each $i\in[n]$, $i$ is the largest element in some block $B_r$ if and
  only if $i$ is an unmatched vertex or $i=n$.
\end{itemize}
We define the \emph{upper block index} $\bind^U_\pi(i)$ of a vertex $i$ to be
the integer $r$ such that $i\in B_r$. Note that $\bind^U_\pi(i)$ is equal to one
more than the number of unmatched vertices appearing before $i$ in the upper
row. The \emph{lower block index} $\bind^L_\pi(i)$ is defined similarly by
considering the ordered set partition of the lower vertices of $\pi$.

\begin{defn}
\label{defn:stat_Mn}
For a matching $\pi\in \M_n$, the \emph{block difference} $\bdiff_\pi(e)$ of an
edge $e=(i,\pi(i))$ is the difference between $\bind_\pi^L(\pi(i))$ and
$\bind_\pi^U(i)$, that is,
\[
  \bdiff_\pi(e) = \bind_\pi^L(\pi(i)) -\bind_\pi^U(i).
\]
An edge $e\in E(\pi)$ is called a \emph{block weak excedance} if
$\bdiff_\pi(e)\ge 0$. Denote the number of block weak excedances in $\pi$ by
$\bwex(\pi)$. The \emph{block weight} $\bwt(\pi)$ of $\pi\in \M_n$ is defined by
\[
\bwt(\pi) = \sum_{\bdiff_\pi(e)\ge 0} \bdiff_\pi(e) + \sum_{\bdiff_\pi(e) < 0} (-\bdiff_\pi(e)-1).
\]
A \emph{crossing} of $\pi$ is a pair $(e,e')$ of edges $e=(i,\pi(i))$ and
$e'=(j,\pi(j))$ in $\pi$ such that $i<j$ and $\pi(i)>\pi(j)$. The number of
crossings of $\pi$ is denoted by $\cross(\pi)$.
\end{defn}

We note that the notion of crossing for a matching $\pi\in\M_n$ is different
from that for a permutation $\sigma\in S_n$. If $\pi\in\PM_n$ corresponds to
$\sigma\in S_n$ using the identification, we have $\CR(\sigma)=\ov(\pi)$ but 
$\CR(\sigma)\ne\cross(\pi)$. A crossing of $\pi\in \M_n$ can be understood as a
pair of edges that intersect in the visualization of $\pi$.

\begin{exam}
  Let $\pi$ be the matching in Figure~\ref{fig:matching}. Then the ordered set
  partition for the upper row is $( \{1,2,3,4\}, \{5,6\}, \{7\})$ and the
  ordered set partition for the lower row is $( \{1,2,3,4,5\}, \{6,7\} )$. Let
  $e=(7,\overline{3})$. The block indices of its two endpoints are
  $\bind^U_\pi(7) = 3$ and $\bind^L_\pi(3) = 1$, so we have $\bdiff_\pi(e) =
  -2$. The number of block weak excedances in $\pi$ is $\bwex(\pi)=3$, the block
  weight of $\pi$ is $\bwt(\pi) = 0$, and the number of crossings of $\pi$ is
  $\cross(\pi)=7$.

\begin{figure}[H]
    \centering
    \resizebox{0.6\linewidth}{!}{
    \begin{tikzpicture}
    \pgfsetcornersarced{\pgfpoint{0.5mm}{0.5mm}}
    \tikzmath{\m = 0.15;\mh=0.3;\Mv=0.2;\Mh=0.45;\Mp=8.5;};

    \filldraw[block] (1-\mh,1-\m) rectangle (4+\mh,1+\m);
    \filldraw[block] (5-\mh,1-\m) rectangle (6+\mh,1+\m);
    \filldraw[block] (7-\mh,1-\m) rectangle (7+\mh,1+\m);
    \filldraw[block] (1-\mh,0-\m) rectangle (5+\mh,0+\m);
    \filldraw[block] (6-\mh,0-\m) rectangle (7+\mh,0+\m);

    \draw[unmarked] (1,1) -- (4,0);
    \draw[unmarked] (2,1) -- (6,0);
    \draw[unmarked] (3,1) -- (2,0);
    \draw[unmarked] (5,1) -- (1,0);
    \draw[unmarked] (7,1) -- (3,0);


\foreach \x   in {1,...,7}
{
    \fill (\x,1) circle (1pt);
    \fill (\x,0) circle (1pt);
    }

    \node[scale=0.7] at (4+0.2,1-0.05) {1};
    \node[scale=0.7] at (6+0.2,1-0.05) {2};
    \node[scale=0.7] at (7+0.2,1-0.05) {3};
    \node[scale=0.7] at (5+0.2,0-0.05) {1};
    \node[scale=0.7] at (7+0.2,0-0.05) {2};
\end{tikzpicture}
	}
    \caption{A matching $\pi$ with its blocks. The block numbers are shown.}  \label{fig:block_index}
\end{figure}

\end{exam}

We are now ready to express the $q$-Laguerre polynomials combinatorially.

\begin{thm} \label{thm:Laguerre}
For $n\ge 0$, we have
\begin{equation} \label{eq:general_Laguerre_combi}
L_n(x;q,y) = \sum_{\pi\in \M_n} (-1)^{\ee(\pi)} y^{\bwex(\pi)} q^{\bwt(\pi)+\cross(\pi)} x^{n-\ee(\pi)}.
\end{equation}
\end{thm}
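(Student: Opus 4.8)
The plan is to prove \eqref{eq:general_Laguerre_combi} by induction on $n$, verifying that its right-hand side, which we temporarily denote $\widetilde{L}_n(x;q,y)$, satisfies the three-term recurrence \eqref{eq:general_recur} together with the correct initial conditions; for $\pi\in\M_m$ write $w(\pi)=(-1)^{\ee(\pi)}y^{\bwex(\pi)}q^{\bwt(\pi)+\cross(\pi)}x^{m-\ee(\pi)}$ for the corresponding summand. The cases $n=0$ and $n=1$ are immediate: $\M_0$ has only the empty matching, so $\widetilde{L}_0=1$; and $\M_1$ has the empty matching on $\{1,\overline 1\}$, contributing $x$, together with the matching $\{(1,\overline 1)\}$, for which both block indices equal $1$, so $\bdiff=0$, $\bwex=1$ and $\bwt=\cross=0$, contributing $-y$. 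Hence $\widetilde{L}_1=x-y$.

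For the inductive step I would partition $\M_{n+1}$ according to the matching status of the upper vertex $n+1$ and the lower vertex $\overline{n+1}$, getting five classes: (a) both unmatched; (b) $n+1$ unmatched and $\overline{n+1}$ matched to some $i\le n$; (c) $n+1$ matched to some $\overline{j}$ with $j\le n$, and $\overline{n+1}$ unmatched; (d) $(n+1,\overline{n+1})\in\pi$; and (e) $n+1$ matched to some $\overline{j}\le n$ while $\overline{n+1}$ is matched to some $i\le n$. Deleting $n+1$ and $\overline{n+1}$ (together with any edge they lie on) yields, in cases (a)--(d), a matching in $\M_n$, and in case (e) a matching in $\M_{n-1}$ after relabeling. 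Class (a) is a weight-preserving bijection onto $\M_n$ up to the extra factor $x$ --- the deleted vertices are isolated, so no block index of a vertex $\le n$ changes --- so it contributes exactly $x\,\widetilde{L}_n$; equivalently, $\widetilde{L}_{n+1}-x\widetilde{L}_n=\sum w(\pi)$ over those $\pi\in\M_{n+1}$ with an edge incident to $n+1$ or $\overline{n+1}$. I would then show that class (d) contributes $-y\,\widetilde{L}_n$ (the new edge $(n+1,\overline{n+1})$ has $\bdiff=0$, creates no crossing, and its removal changes no other block index), that classes (b) and (c) together contribute $-(yq[n]_q+[n]_q)\widetilde{L}_n$, and that class (e) contributes $-y[n]_q^2\,\widetilde{L}_{n-1}$; adding these to $x\,\widetilde{L}_n$ gives $(x-y[n+1]_q-[n]_q)\widetilde{L}_n-y[n]_q^2\widetilde{L}_{n-1}$, since $1+q[n]_q=[n+1]_q$. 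In the latter three classes the $q$-integer factors appear because, after deleting a new edge, the only lost information is the position of that edge among the remaining ones, and this position is encoded by the number of crossings the edge created, which ranges over $\{0,1,\dots,n-1\}$ or $\{0,1,\dots,n\}$ and thus contributes a factor $[n]_q$ or $[n+1]_q$ upon summation.

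The main obstacle will be the bookkeeping for the block indices $\bind^U$ and $\bind^L$, hence for $\bdiff$, $\bwex$ and $\bwt$. In contrast to $\cross$, these statistics are not local: inserting a new unmatched vertex, or removing a matched one, shifts the block index of every vertex lying to its right, which can flip edges between being block weak excedances and not, changing $\bwt$ in a compensating way. So a class like (b) or (e) is not merely a bijection times a monomial; one must check that the cumulative changes in $\bwex$, $\bwt$ and $\cross$ combine to give exactly the power of $y$, the power of $q$ and the $q$-integer prescribed by the recurrence. I expect the cleanest organization is to first prove a lemma recording precisely how $\bind^U_\pi$, $\bind^L_\pi$ and $\bdiff_\pi$ transform under the elementary operations ``append an unmatched vertex'', ``append the edge $(n+1,\overline{n+1})$'', ``append an edge from $n+1$ to an existing lower vertex'', and ``append an edge from an existing upper vertex to $\overline{n+1}$'', and then assemble the five classes from these building blocks. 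An alternative would be to pass to Viennot's weighted Motzkin-type path model and read the recurrence off the last step of a path, but carrying $\bwex$, $\bwt$ and $\cross$ through that translation appears to demand essentially the same analysis.
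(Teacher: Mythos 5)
Your overall strategy---induction via the three-term recurrence, classifying the matchings in $\M_{n+1}$ by the status of the last upper and lower vertices---is the same as the paper's, and your classes (a) and (d) are handled correctly. However, the per-class contributions you assign to (b), (c) and (e) are false, and this is a genuine gap rather than deferred bookkeeping: the intermediate identities you propose to verify do not hold, so the program as written cannot be completed. The smallest case already refutes them. For $n+1=2$, class (e) consists of the single matching $\{(1,\overline{2}),(2,\overline{1})\}$, whose weight is $(-1)^2y^2q^{0+1}=y^2q$ (both edges have block difference $0$, so $\bwex=2$ and $\bwt=0$, and there is one crossing; this is the last term in Figure~3), whereas your claim gives $-y[1]_q^2L_0=-y$. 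Note in particular that class (e) adds \emph{two} edges, so it preserves the sign $(-1)^{\ee}$ and can never produce the negatively signed term $-y[n]_q^2L_{n-1}$. Similarly, classes (b) and (c) for $n+1=2$ contribute $-xyq$ and $-x$, totalling $-xyq-x$, while your claimed $-(yq[1]_q+[1]_q)L_1=-(yq+1)(x-y)$ equals $-xyq-x+y^2q+y$. The structural reason is that deleting the edge at $\overline{n+1}$ (resp.\ at $n+1$) together with \emph{both} of its endpoints lands only in the subset of $\M_n$ whose last upper (resp.\ lower) vertex is unmatched, not in all of $\M_n$; so these classes are $q$-integer multiples of restricted sums, not of $L_n$. (If instead you delete only $n+1$ and $\overline{n+1}$, the other endpoint becomes a new unmatched vertex and the fiber over $\pi'\in\M_n$ has size depending on $\pi'$, which is no better.) Your two errors cancel in the total, which is why the bottom-line arithmetic matches the recurrence, but each intermediate claim is wrong.

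The paper avoids this with a coarser regrouping. Its Case 2 is the union of your (b), (d) and (e): all $\pi$ in which $\overline{n+1}$ is matched, possibly to $n+1$. Deleting the edge at $\overline{n+1}$ together with both endpoints then surjects onto \emph{all} of $\M_n$; the deleted edge is automatically a block weak excedance because its lower endpoint lies in the last lower block, and the position $i$ of its upper endpoint contributes $q^{n+1-i}$ once the changes in $\bwt$ and $\cross$ are combined, giving $-y[n+1]_qL_n$ cleanly. Your class (c) is the paper's Case 3; its image is exactly the restricted set of matchings in $\M_n$ with $\overline{n}$ unmatched, and the paper closes the resulting gap with the auxiliary identity $L_n=\widetilde{L}_n-y[n]_qL_{n-1}$ for that restricted sum (proved by the same Case 2 argument one level down)---this is where the $-y[n]_q^2L_{n-1}$ term of the recurrence actually originates, not from your class (e). To repair your proof you would need either this regrouping or explicit identities for the restricted sums over matchings with a prescribed last vertex unmatched.
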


\begin{exam}
There are 7 matchings of degree $2$ as shown in Figure~\ref{fig:7}. 

\begin{figure}[H]
    \centering
    
    \begin{tikzpicture}

\foreach \x[count=\xi]   in 
{-1,1,2,-1}
{
    \fill (2.5*\xi+2,1) circle (1pt);
    \fill (2.5*\xi+1,1) circle (1pt);
    \fill (2.5*\xi+2,0) circle (1pt);
    \fill (2.5*\xi+1,0) circle (1pt);
    \ifthenelse{\x>0}{\draw[unmarked] (2.5*\xi+1,1) -- (2.5*\xi+\x,0);}{}
}
\foreach \x[count=\xi]   in 
{-1,-1,-1,1}
{
    \ifthenelse{\x>0}{\draw[unmarked] (2.5*\xi+2,1) -- (2.5*\xi+\x,0);}{}
}

\foreach \x[count=\xi]   in 
{-1,1,2}
{
    \fill (2.5*\xi+3.25,1-2.5) circle (1pt);
    \fill (2.5*\xi+2.25,1-2.5) circle (1pt);
    \fill (2.5*\xi+3.25,0-2.5) circle (1pt);
    \fill (2.5*\xi+2.25,0-2.5) circle (1pt);
    \ifthenelse{\x>0}{\draw[unmarked] (2.5*\xi+2.25,1-2.5) -- (2.5*\xi+1.25+\x,0-2.5);}{}
}
\foreach \x[count=\xi]   in 
{2,2,1}
{
    \ifthenelse{\x>0}{\draw[unmarked] (2.5*\xi+3.25,1-2.5) -- (2.5*\xi+1.25+\x,0-2.5);}{}
}

    \node[scale=1] at (1*2.5+1.5,-0.5) {$x^2$};
    \node[scale=1] at (2*2.5+1.5,-0.5) {$-xy$};
    \node[scale=1] at (3*2.5+1.5,-0.5) {$-xyq$};
    \node[scale=1] at (4*2.5+1.5,-0.5) {$-x$};
    \node[scale=1] at (1*2.5+1.5+1.25,-0.5-2.5) {$-xy$};
    \node[scale=1] at (2*2.5+1.5+1.25,-0.5-2.5) {$y^2$};
    \node[scale=1] at (3*2.5+1.5+1.25,-0.5-2.5) {$y^2q$};
\end{tikzpicture}
    \caption{The matchings of degree $2$ and their corresponding terms.}
    \label{fig:7}
\end{figure}
Then by Theorem~\ref{thm:Laguerre}, we have
\[
L_2 (x;q,y) = x^2-(yq + 2y + 1)x + y^2 + y^2q.
\]
\end{exam}

\begin{proof}[Proof of Theorem~\ref{thm:Laguerre}]
  The proof is by induction on $n$. The cases for $n=0,1$ are easy to check. For
  $n\ge 2$ we will show that the right hand side of
  \eqref{eq:general_Laguerre_combi} satisfies the three-term recurrence
  \eqref{eq:general_recur}, which we recall here:
  \begin{equation}
    \label{eq:1}
L_{n+1}(x;q,y) = (x-y[n+1]_q-[n]_q) L_{n}(x;q,y) - y[n]^2_q L_{n-1}(x;q,y).
  \end{equation}
For each matching $\pi\in \M_{n+1}$ there are three cases as follows.

\begin{description}
\item[Case 1] Two vertices $n+1$ and $\overline{n+1}$ are both unmatched. Let
  $\pi'\in \M_n$ be the matching obtained from $\pi$ by deleting the last vertex
  in each row. Clearly all statistics but the number of unmatched vertices of
  $\pi$ and $\pi'$ are equal. Then this case contributes $xL_n(x;q,y)$ to the
  right-hand side of \eqref{eq:1}.

\item[Case 2] The vertex $\overline{n+1}$ is matched to some vertex $i$, i.e.,
  there is an edge $e_i=(i, \overline{n+1})$ $\in E(\pi)$. Let $\pi'$ be the matching
  obtained from $\pi$ by deleting $e_i$ and its end vertices and we regard
  $\pi'$ as a matching in $\M_n$. Since the deleted vertex $i$ is matched in
  $\pi$, the block indices of vertices of $\pi$ and $\pi'$ are equal, so are the
  block differences. That is, $\bdiff_\pi(e) = \bdiff_{\pi'}(e)$ for $e\in
  E(\pi)\setminus \{e_i\}$. Since the number of block weak excedances and the
  block weight of $\pi$ depend only on the block differences, we only need to
  consider the contribution of $e_i$ to $\bwex(\pi)$ and $\bwt(\pi)$. The lower
  block index $\bind^L_\pi(\overline{n+1})$ is one more than the number of
  unmatched vertices in the lower row, so $\bind^L_\pi(\overline{n+1}) =
  n+2-\ee(\pi)$. Then $e_i$ is automatically a block weak excedance, so
  $\bwex(\pi) = \bwex(\pi')+1$.
  To consider the block weight, let $m$ be the number of matched upper vertices $j$ such that $i<j$. It is clear that an edge $e_j$ crosses $e_i$ if and only if $i<j$, and hence $\cross(\pi) = \cross(\pi')+m$. It is easy to check that $\bind^U_\pi(i)=i+1-\ee(\pi)+m$, so $\bdiff_\pi(e_i) = n+1-i-m$ and $\bwt(\pi) = \bwt(\pi')+n+1-i-m$. Thus Case 2 corresponds to the term $\sum_{i=1}^{n+1} (-yq^{n+1-i} L_n(x;q,y)) = -y [n+1]_q L_n(x;q,y)$.
  
\item[Case 3] The vertex $\overline{n+1}$ is unmatched and the vertex $n+1$ is
  matched to some vertex $\overline{i}$ where $i\le n$. This case is similar to
  Case 2, except that the edge $(n+1,\overline{i})$ is not a block weak
  excedance. Letting $\widetilde{\M}_n$ be the set of matchings in $\M_n$ such
  that $\overline{n}$ is unmatched and
\[
\widetilde{L}_n(x;q,y) \coloneqq \sum_{\pi\in \widetilde{\M}_n} (-1)^{\ee(\pi)} y^{\bwex(\pi)} q^{\bwt(\pi)+\cross(\pi)} x^{n-\ee(\pi)},
\]
we obtain that Case 3 contributes $-[n]_q \widetilde{L}_n(x;q,y)$.
\end{description}
From Cases 1, 2 and 3, we have
\[
L_{n+1}(x;q,y) = (x-y[n+1]_q) L_n(x;q,y) - [n]_q \widetilde{L}_n(x;q,y).
\]
Comparing this with \eqref{eq:1}, it is enough to show that
\[
L_n(x;q,y) = \widetilde{L}_n(x;q,y) - y[n]_q L_{n-1}(x;q,y).
\]
By the same argument in Case 2, the second term (including the negative sign) in the right-hand side of the above equation is equal to
\[
\sum_{\pi\in \M_n\setminus \widetilde{M}_n} (-1)^{\ee(\pi)} y^{\bwex(\pi)} q^{\bwt(\pi)+\cross(\pi)} x^{n-\ee(\pi)},
\]
then the proof follows.
\end{proof}

Now we modify the combinatorial expression \eqref{eq:general_moment1} for the
moment $\mu_n(q,y)$ so that the new expression is more suitable for our
approach. For $\pi\in \PM_n$, the \emph{weight} $\wt(\pi)$ of $\pi$ is defined
by
\[
\wt(\pi)= \sum\limits_{\pi(i)\ge i}(\pi(i)-i)+\sum\limits_{\pi(i)<i}(i-\pi(i)-1).
\]
In fact, this definition is obtained from the definition of the block weight by replacing block differences $\bdiff_\pi(e)$ by $\pi(i)-i$. The following lemma gives a relation between $\ov(\pi)$, $\wt(\pi)$ and $\cross(\pi)$.
\begin{lem} \label{lem:ov_wt_cr}
For $\pi\in \PM_n$, $\ov(\pi)=\wt(\pi)-\cross(\pi)$.
\end{lem}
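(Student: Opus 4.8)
The plan is to prove the identity $\ov(\pi) = \wt(\pi) - \cross(\pi)$ by establishing a bijective correspondence at the level of pairs of edges, i.e.\ by tracking the contribution of each relevant pair $(i,j)$ to the three statistics. First I would recall that for $\pi \in \PM_n$, the quantity $\wt(\pi)$ decomposes as a sum over vertices $i$ of a ``span'' contribution: a weak excedance $i \le \pi(i)$ contributes $\pi(i)-i$, which counts the integers $\ell$ with $i \le \ell < \pi(i)$ (equivalently, the vertical lines between columns strictly inside the arc $e_i$), while a deficiency $i > \pi(i)$ contributes $i - \pi(i) - 1$, counting the integers $\ell$ with $\pi(i) < \ell < i$. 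So $\wt(\pi)$ is the total number of pairs (edge $e_i$, integer $\ell$) such that $\ell$ lies strictly between the two endpoints of $e_i$ in the natural sense above. The key step is then to classify, for a fixed edge $e_i$ and a fixed integer $\ell$ with $\ell$ ``inside'' $e_i$, how the edge $e_\ell = (\ell, \pi(\ell))$ meets $e_i$, and to show that exactly those pairs contributing to $\cross(\pi)$ must be subtracted off, leaving precisely the overlapping pairs counted by $\ov(\pi)$.

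Concretely, I would fix an edge $e = e_i = (i, \pi(i))$ and, for each other edge $e' = e_j = (j, \pi(j))$, compute the contributions of the unordered-pair $\{e, e'\}$ (suitably symmetrized) to $\wt$, $\cross$, and $\ov$. It is cleanest to say: $\wt(\pi) = \sum_i w(e_i)$ where $w(e_i)$ is the number of indices $\ell \in [n]$, $\ell \ne i$, such that either $i \le \pi(i)$ and $i \le \ell < \pi(i)$, or $i > \pi(i)$ and $\pi(i) < \ell < i$; note each such $\ell$ is an upper vertex, hence the tail of a unique edge $e_\ell$. I would then enumerate the possible relative positions of $e_i$ and $e_\ell$ — there are essentially six configurations depending on the orderings of the four values $i, \pi(i), \ell, \pi(\ell)$ (with care about the boundary case $\ell$ or $\pi(\ell)$ equal to an endpoint of $e_i$, and the possibility that $\ell$ feeds back, i.e.\ $e_\ell$ and $e_i$ share structure) — and check in each case whether $\ell$ is counted by $w(e_i)$, whether $(e_i,e_\ell)$ or $(e_\ell,e_i)$ is a crossing, and whether it is an overlapping pair. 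Summing these local equalities over all ordered pairs of distinct edges yields $\wt(\pi) - \cross(\pi) = \ov(\pi)$.

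An alternative and perhaps slicker route, which I would pursue if the case analysis gets unwieldy, is induction on $n$ via removal of a chosen vertex (say vertex $n$, or the vertex $1$), mirroring the recursive structure already used in the proof of Theorem~\ref{thm:Laguerre}: one removes an edge, tracks how $\ov$, $\wt$, and $\cross$ each change, and checks the net change in $\wt - \cross$ equals the net change in $\ov$. The bookkeeping here is: removing the edge incident to vertex $n$ (say $(n,\pi(n))$ with $\pi(n) \le n$) and relabeling decreases $\wt$ by $(n - \pi(n) - 1)$ plus a correction from the relabeling of vertices $> \pi(n)$, decreases $\cross$ by the number of edges crossing it, and decreases $\ov$ by the number of overlapping partners of this edge; one then verifies these balance. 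Either way, the main obstacle is the boundary/degenerate cases in the pair-by-pair accounting — in particular, handling when $\ell$ equals an endpoint of $e_i$ (where the asymmetry between the $+(\pi(i)-i)$ term and the $+(i-\pi(i)-1)$ term, i.e.\ the ``$-1$'' for deficiencies, is exactly what makes the crossing correction come out right) and making sure no pair is double-counted or missed when passing between the per-vertex description of $\wt$ and the per-edge-pair descriptions of $\cross$ and $\ov$. I expect the identity to fall out cleanly once the ``$-1$'' in the deficiency term of $\wt$ is matched against the crossings of the form $\pi(i) < \pi(j) < i < j$ counted in $\ov$ but with opposite orientation in $\cross$.
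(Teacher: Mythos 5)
Your main plan has a genuine gap: the per-pair ``local equality'' you intend to sum is false. If you read the span of each edge on the upper row only (so that $\wt(\pi)$ counts pairs $(e_i,\ell)$ with $\ell$ an upper vertex inside the span of $e_i$), then for a fixed unordered pair of edges the quantity (contribution to $\wt$) minus (contribution to $\cross$) need not equal the contribution to $\ov$. Concretely, take $\pi\in\PM_3$ with $\pi(1)=2$, $\pi(2)=3$, $\pi(3)=1$, and consider the pair $\{e_1,e_3\}=\{(1,\overline{2}),(3,\overline{1})\}$: the vertex $3$ does not lie in the span of $e_1$, the vertex $1$ does not lie in the span $(\pi(3),3)=(1,3)$ of $e_3$, and the pair is not overlapping, yet it is a crossing; the local balance is $0-1=-1\neq 0$. (The pair $\{e_2,e_3\}$ over-counts by exactly $+1$, which is why the global identity still holds.) In general a crossing $(i,j)$ is picked up by the upper-row spans once for each of the two conditions $j\le\pi(i)$ and $i>\pi(j)$ that happens to hold, and this can be $0$, $1$ or $2$ times; so the identity does not localize to pairs of edges under a one-sided reading of the spans, and the six-configuration case check will not close as stated. (There is also a small off-by-one in your formal definition of $w(e_i)$: excluding $\ell=i$ from $\{\ell: i\le\ell<\pi(i)\}$ leaves $\pi(i)-i-1$ elements, not $\pi(i)-i$.)

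The missing idea, which is exactly what the paper's proof supplies, is to symmetrize over the two rows: write $\wt(\pi)$ once with spans read on the lower row, using $\pi(i)-i=|\{j: i\le\pi(j)<\pi(i)\}|$ and $i-\pi(i)-1=|\{j:\pi(i)<\pi(j)<i\}|$, and once with spans read on the upper row, using $\pi(i)-i=|\{j: i<j\le\pi(i)\}|$ and $i-\pi(i)-1=|\{j:\pi(i)<j<i\}|$. Adding the two expressions gives $2\wt(\pi)=2\ov(\pi)+(\text{leftover})$, and now each crossing is counted exactly twice in the leftover --- once from the dichotomy $i\le\pi(j)$ versus $i>\pi(j)$ and once from $j\le\pi(i)$ versus $j>\pi(i)$ --- so the doubled identity does hold crossing by crossing. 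Your fallback of inducting on $n$ by deleting the edge at vertex $n$ and tracking the changes in $\ov$, $\wt$ and $\cross$ is a plausible alternative route, but as written it is only a sketch and would need the bookkeeping carried out in full.
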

\begin{proof}
We prove that $\wt(\pi)=\ov(\pi)+\cross(\pi)$.
By the definition of weight,
\begin{align*}
\wt(\pi)
    &=\sum\limits_{\pi(i)\ge i}(\pi(i)-i)+\sum\limits_{\pi(i)<i}(i-\pi(i)-1)\\
    &=\sum\limits_{\pi(i)\ge i}|\{j:i\le\pi(j)<\pi(i)\}|+\sum\limits_{\pi(i)<i}|\{j:\pi(i)<\pi(j)<i\}|\\
    &=\sum\limits_{\pi(i)\ge i}|\{j:j<i\le\pi(j)<\pi(i)\}|+\sum\limits_{\pi(i)\ge i}|\{j:i<j\mbox{ and }i\le\pi(j)<\pi(i)\}|\\
    &\qquad +\sum\limits_{\pi(i)<i}|\{j:\pi(i)<\pi(j)<i<j\}|+
    \sum\limits_{\pi(i)<i}|\{j:j<i\mbox{ and }\pi(i)<\pi(j)<i\}|.
\end{align*}
On the right-hand side of the last equation, it is clear that the sum of the first and third summands is equal to $\ov(\pi)$.

On the other hand, the weight of $\pi$ can also be expressed as
\begin{align*}
\wt(\pi)
    &=\sum\limits_{\pi(i)\ge i}(\pi(i)-i)+\sum\limits_{\pi(i)<i}(i-\pi(i)-1)\\
    &=\sum\limits_{\pi(i)\ge i}|\{j:i<j\le\pi(i)\}|+\sum\limits_{\pi(i)<i}|\{j:\pi(i)<j<i\}|\\
    &=\sum\limits_{\pi(i)\ge i}|\{j:i<j\le\pi(i)<\pi(j)\}|+\sum\limits_{\pi(i)\ge i}|\{j:\pi(j)<\pi(i)\mbox{ and }i<j\le\pi(i)\}|\\
    &\qquad+\sum\limits_{\pi(i)<i}|\{j:\pi(j)<\pi(i)<j<i\}|+
    \sum\limits_{\pi(i)<i}|\{j:\pi(i)<\pi(j)\mbox{ and }\pi(i)<j<i\}|.
\end{align*}
Similarly, in the right-hand side of the last equation, it is clear that the sum of the first and third summands is equal to $\ov(\pi)$. Thus, with a slight change of variables, it is enough to show that
\begin{align*}
2\cross(\pi)
    &=\sum\limits_{\pi(i)\ge i}|\{j:i<j\mbox{ and } i\le\pi(j)<\pi(i)\}|\\&\qquad+\sum\limits_{\pi(i)<i}|\{j:j<i\mbox{ and }\pi(i)<\pi(j)<i\}|\\
    &\qquad+\sum\limits_{\pi(i)\ge i}|\{j:\pi(j)<\pi(i)\mbox{ and }i<j\le\pi(i)\}|\\&\qquad+\sum\limits_{\pi(i)<i}|\{j:\pi(i)<\pi(j)\mbox{ and }\pi(i)<j<i\}|\\
\end{align*}
\begin{align*}
    &=|\{(i,j):i<j, \pi(i)>\pi(j) \mbox{ and } i\le\pi(j)\}|\\&\qquad+|\{(i,j):i<j, \pi(i)>\pi(j)\mbox{ and }j>\pi(i)\}|\\
    &\qquad+|\{(i,j):i<j, \pi(i)>\pi(j)\mbox{ and }j\le\pi(i)\}|\\&\qquad+|\{(i,j):i<j,\pi(i)>\pi(j)\mbox{ and }i>\pi(j)\}|.
\end{align*}
One can see that each crossing of $\pi$ is counted twice in the right-hand side
of the above equation. To be precise, a pair $(i,j)$ of integers such that
$(e_i,e_j)$ is a crossing of $\pi$ is counted once either in the first or 
last summand depending on the sign of $i-\pi(j)$, and counted once again either
in the second or third summand depending on the sign of $j-\pi(i)$. This
completes the proof.
\end{proof}

By Lemma~\ref{lem:ov_wt_cr} we can rewrite the moment $\mu_n(q,y)$ using $\wt(\pi)$ and $\cross(\pi)$ instead of $\ov(\pi)$:
\begin{equation} \label{eq:general_moment2}
\mu_n(q,y) = \sum_{\pi\in \PM_n} y^{\wex(\pi)} q^{\wt(\pi)-\cross(\pi)}.
\end{equation}
In the next section we will use Theorem~\ref{thm:Laguerre} and
\eqref{eq:general_moment2} to give a combinatorial meaning to the linearization
coefficients of $q$-Laguerre polynomials.


\section{Linearization coefficients and a sign-reversing involution}
\label{sec:line-coeff-sign}
\subsection{A combinatorial interpretation of linearization coefficients}

In this section we give a combinatorial interpretation of the
linearization coefficient $C(n_1,\dots,n_k)\coloneqq\LL(L_{n_1}\cdots L_{n_k})$
of the $q$-Laguerre polynomials $L_{n}=L_{n}(x;q,y)$.
 First we recall the
expression of $L_n$ in terms of matchings in Theorem \ref{thm:Laguerre}:
\begin{equation}
\label{eq:singleL}
L_n=\sum_{\pi \in \M_n}(-1)^{\ee(\pi)}y^{\bwex(\pi)}q^{\bwt(\pi)+\cross(\pi)}x^{n-\ee(\pi)}.
\end{equation}
To give a description of the product $L_{n_1}\cdots L_{n_k}$, we embed
$\M_{n_1}\times\cdots\times \M_{n_k}$ in $\M_{N}$, where
$N=\sum_{i=1}^{k}n_i$, by horizontally concatenating the $k$ matchings
$\pi_1,\dots,\pi_k$ for each $(\pi_1,\dots,\pi_k)\in
\M_{n_1}\times\dots\times\M_{n_k}$. Let $\M_{n_1,\dots,n_k}\subset \M_N$ denote
the embedded image of $\M_{n_1}\times\cdots\times \M_{n_k}$.

Let $\pi\in \M_N$. We say that an edge $(i,\pi(i))$ of $\pi$ is \emph{homogeneous} with
respect to $(n_1,\dots,n_k)$ if 
\[
n_1+\dots+n_{r-1}+1 \le i,\pi(i) \le n_1+\dots+n_r,
\]
for some $1\le r\le k$, and \emph{inhomogeneous} otherwise. For
simplicity, we omit the expression `with respect to $(n_1,\dots,n_k)$' when
there is no confusion. Note that $\M_{n_1,\dots,n_k}$ is the set of matchings in
$\M_N$ such that every edge is homogeneous. We will write $E^H(\pi)$ for the set
of homogeneous edges of $\pi$.

Note that if $\pi\in\M_{n_1,\dots,n_k}$ is the concatenation of
$\pi_1,\dots,\pi_k$, then each statistic in \eqref{eq:singleL} satisfies the
relation $\operatorname{stat}(\pi)=\sum_{i=1}^{k}\operatorname{stat}(\pi_i)$.
Thus the product $L_{n_1}\cdots L_{n_k}$ is written as
\begin{equation}\label{eq:product}
L_{n_1}\cdots L_{n_k}=\sum_{\pi \in \M_{n_1,\dots,n_k}}(-1)^{\ee(\pi)}y^{\bwex(\pi)}q^{\bwt(\pi)+\cross(\pi)}x^{N-\ee(\pi)}.
\end{equation}
Applying $\LL$ to \eqref{eq:product}, we have
\[
  \LL\left(L_{n_1}\cdots L_{n_k}\right)=\sum_{\pi\in
    \M_{n_1,\dots,n_k}}(-1)^{\ee(\pi)}y^{\bwex(\pi)}q^{\bwt(\pi)+\cross(\pi)}\LL\left(x^{N-\ee(\pi)}\right).
\]
Here we recall the formula of the $n$th moment in \eqref{eq:general_moment2}:
$$
\mu_n(q,y)=\LL(x^n)=\sum_{\pi \in \PM_{n}}y^{\wex(\pi)}q^{\wt(\pi)-\cross(\pi)}.
$$
 Note that $N-\ee(\pi)$, the power of $x$ in \eqref{eq:product}, represents the number of unmatched vertices in the upper (or lower) row, or equivalently, the number of edges we need to add to make it a perfect matching. Thus, applying the functional $\LL$ to $x^{N-\ee(\pi)}$ is interpreted as summing up all possible ways to complete $\pi$ into a perfect matching, by adding edges on the unmatched vertices, allowing inhomogeneous edges.
\begin{figure}[H]
    \resizebox{\linewidth}{!}{
    \begin{tikzpicture}
\pgfsetcornersarced{\pgfpoint{0.5mm}{0.5mm}}
    \tikzmath{\Mv=0.2;\Mh=0.4;\Mp=8.5;};
    
    \draw (1-\Mh-1,0-\Mv-3) rectangle (2+\Mh-1,1+\Mv-3);
    \draw (3-\Mh-1,0-\Mv-3) rectangle (5+\Mh-1,1+\Mv-3);
    \draw (6-\Mh-1,0-\Mv-3) rectangle (7+\Mh-1,1+\Mv-3);
    

    \draw[unmarked] (1-1,1-3) -- (2-1,0-3);
    \draw[unmarked] (3-1,1-3) -- (4-1,0-3);
    \draw[unmarked] (5-1,1-3) -- (3-1,0-3);
    \draw[unmarked] (6-1,1-3) -- (6-1,0-3);

\foreach \x  in {1,...,7}
{
    \fill (\x-1,1-3) circle (1pt);
    \fill (\x-1,0-3) circle (1pt);
    }
    \node[scale=1] at (4-1,-0.5-3) {$x^3y^3q^2$};
    \draw[->,thick] (7.7,0.5) -- (8.7,0.5);
    \draw[->,thick] (7.7,0.5-2) -- (8.7,0.5-2);
    \draw[->,thick] (7.7,0.5-6) -- (8.7,0.5-6);
    \draw[thick] (7.7,0.5) -- (7.7,0.5-6);
    \draw[thick] (6.7,-2.5) -- (7.7,-2.5);
    \node[scale=1.8] at (7.1,-2) {$\LL$};

    
    \draw (1-\Mh+\Mp,0-\Mv) rectangle (2+\Mh+\Mp,1+\Mv);
    \draw (3-\Mh+\Mp,0-\Mv) rectangle (5+\Mh+\Mp,1+\Mv);
    \draw (6-\Mh+\Mp,0-\Mv) rectangle (7+\Mh+\Mp,1+\Mv);
    

    \draw[unmarked] (1+\Mp,1) -- (2+\Mp,0);
    \draw[unmarked] (3+\Mp,1) -- (4+\Mp,0);
    \draw[unmarked] (5+\Mp,1) -- (3+\Mp,0);
    \draw[unmarked] (6+\Mp,1) -- (6+\Mp,0);
    
    \draw[marked] (2+\Mp,1) -- (1+\Mp,0);
    \draw[marked] (4+\Mp,1) -- (5+\Mp,0);
    \draw[marked] (7+\Mp,1) -- (7+\Mp,0);

\foreach \x  in {1,...,7}
{
    \fill (\x+\Mp,1) circle (1pt);
    \fill (\x+\Mp,0) circle (1pt);
    }
\node[scale=1.2] at (10+\Mp,0.5) {$(y^3q^0)y^3q^2$};
\node[scale=1.2] at (10+\Mp,-0.5) {$+$};
    
    \draw (1-\Mh+\Mp,0-\Mv-2) rectangle (2+\Mh+\Mp,1+\Mv-2);
    \draw (3-\Mh+\Mp,0-\Mv-2) rectangle (5+\Mh+\Mp,1+\Mv-2);
    \draw (6-\Mh+\Mp,0-\Mv-2) rectangle (7+\Mh+\Mp,1+\Mv-2);
    

    \draw[unmarked] (1+\Mp,1-2) -- (2+\Mp,0-2);
    \draw[unmarked] (3+\Mp,1-2) -- (4+\Mp,0-2);
    \draw[unmarked] (5+\Mp,1-2) -- (3+\Mp,0-2);
    \draw[unmarked] (6+\Mp,1-2) -- (6+\Mp,0-2);
    
    \draw[marked] (2+\Mp,1-2) -- (1+\Mp,0-2);
    \draw[marked] (4+\Mp,1-2) -- (7+\Mp,0-2);
    \draw[marked] (7+\Mp,1-2) -- (5+\Mp,0-2);

\foreach \x  in {1,...,7}
{
    \fill (\x+\Mp,1-2) circle (1pt);
    \fill (\x+\Mp,0-2) circle (1pt);
    }
\node[scale=1.2] at (10+\Mp,0.5-2) {$(y^2q^0)y^3q^2$};
\node[scale=1.2] at (10+\Mp,-0.5-2) {$+$};
\node[scale=1.2] at (8.4,0.5-4) {$\vdots$};
\node[scale=1.2] at (4+\Mp,0.5-4) {$\vdots$};
\node[scale=1.2] at (10+\Mp,0.5-4) {$\vdots$};
\node[scale=1.2] at (10+\Mp,-0.5-4) {$+$};
    
    \draw (1-\Mh+\Mp,0-\Mv-6) rectangle (2+\Mh+\Mp,1+\Mv-6);
    \draw (3-\Mh+\Mp,0-\Mv-6) rectangle (5+\Mh+\Mp,1+\Mv-6);
    \draw (6-\Mh+\Mp,0-\Mv-6) rectangle (7+\Mh+\Mp,1+\Mv-6);
    

    \draw[unmarked] (1+\Mp,1-6) -- (2+\Mp,0-6);
    \draw[unmarked] (3+\Mp,1-6) -- (4+\Mp,0-6);
    \draw[unmarked] (5+\Mp,1-6) -- (3+\Mp,0-6);
    \draw[unmarked] (6+\Mp,1-6) -- (6+\Mp,0-6);
    
    \draw[marked] (2+\Mp,1-6) -- (7+\Mp,0-6);
    \draw[marked] (4+\Mp,1-6) -- (5+\Mp,0-6);
    \draw[marked] (7+\Mp,1-6) -- (1+\Mp,0-6);

\foreach \x  in {1,...,7}
{
    \fill (\x+\Mp,1-6) circle (1pt);
    \fill (\x+\Mp,0-6) circle (1pt);
    }
\node[scale=1.2] at (10+\Mp,0.5-6) {$(y^2q^0)y^3q^2$};
\end{tikzpicture}
}

 \captionsetup{format=hang}
    \caption{An example of applying $\LL$ to a term $x^3y^3q^2$ in the product $L_2L_3L_2$. There are 3!=6 terms in $\LL(x^3)$ corresponding to all possible completions of the original matching. }
    \label{fig:linearfunctional_ex}
\end{figure}
\begin{exam}
Figure \ref{fig:linearfunctional_ex} describes an example of the application of
$\LL$. The matching on the left side represents a term $x^3y^3q^2$ in
$L_2L_3L_2$, which is the product of three terms $-xyq$, $xyq$ and $-xy$ in $L_2$, $L_3$ and $L_2$, respectively. Applying $\LL$ gives an equation $$
\left(\sum_{\pi \in \PM_{3}}y^{\wex(\pi)}q^{\wt(\pi)-\cross(\pi)}\right)y^3q^2,
$$
where each summand corresponds to a way to add edges to remaining vertices, represented in dashed lines.
\end{exam}
 In order to describe the expansion of $\LL(L_{n_1}\cdots L_{n_k})$, we introduce a perfect matching model containing the information of which edges are newly added by applying $\LL$.
 Let $\PM_{n_1,\dots,n_k}^{*}$ be the set of pairs $\mm=(\pi,S)$ such that
 \begin{itemize}
    \item $\pi\in \M_{N}$ is a perfect matching of degree $N=\sum_{i=1}^{k}n_i$,
    \item $S$ is a subset of edges in $\pi$, which contains all inhomogeneous edges of $\pi$, i.e., $E(\pi)\setminus E^H(\pi)\subseteq S$.
\end{itemize}
We call an element $\mm=(\pi,S)$ of $\PM_{n_1,\dots,n_k}^{*}$ a \emph{marked
  perfect matching}. An edge $e$ of $\pi$ is said to be \emph{marked} if $e\in
S$. In other words, $S$ is the set of marked edges. With marks on edges, we can
distinguish new edges added by applying $\LL$ from the original edges from
$L_{n_1}\cdots L_{n_k}$. The condition $E(\pi)\setminus E^H(\pi)\subseteq S$ is
needed since inhomogeneous edges cannot be present in the original matching
coming from $L_{n_1}\cdots L_{n_k}$.

Now we give a bijective correspondence between $\PM^{*}_{n_1,\dots,n_k}$ and the terms in the expansion of
$\LL(L_{n_1}\cdots L_{n_k})$. To do this, we
extend our former definitions of statistics on $\M_n$ and $\PM_n$ to marked
perfect matchings. In detail, we consider the decomposition of $\mm$ into
unmarked and marked portions. For $\mm=(\pi,S)\in \PM^{*}_{n_1,\dots,n_k}$,
define $\pi\setminus S$ and $\pi|_S$ as follows:
\begin{itemize}
\item $\pi\setminus S$ (\emph{unmarked portion} of $\mm$) is the matching in
  $\M_{n_1,\dots,n_k}$ with $n_1+\cdots+n_k-|S|$ edges obtained from $\pi$ by
  deleting the $|S|$ marked edges but leaving their incident vertices not
  deleted.
\item $\pi|_S$ (\emph{marked portion} of $\mm$) is the perfect matching in
  $\PM_{|S|}$ obtained from $\pi$ by deleting all unmarked edges and their
  adjacent vertices.
\end{itemize}

\begin{defn}
\label{defn:stat_PMstar}
For $\mm=(\pi,S)\in \PM^{*}_{n_1,\dots,n_k}$, define statistics $\ee(\mm),\bwex(\mm),\cross(\mm)$ and $\wt(\mm)$ as follows:
\begin{align*}
    \ee(\mm)&=\ee(\pi\setminus S),&
    \bwex(\mm)&=\bwex(\pi\setminus S)+\wex(\pi|_S),\\
    \cross(\mm)&=\cross(\pi\setminus S)-\cross(\pi|_S),&
    \wt(\mm)&=\bwt(\pi\setminus S)+\wt(\pi|_S).
\end{align*}
\end{defn}
\begin{remark}
\label{remark:stat_explained}
Indeed, the notions of $\bwex$ and $\wt$ in Definition \ref{defn:stat_PMstar} is still compatible with those of block index and block difference we defined earlier on a matching in $\M_n$. The only difference is that the blocks are separated by the vertices incident to marked edges, instead of unmatched ones. More precisely, for a marked perfect matching $\mm=(\pi,S)\in \PM_{n_1,\dots,n_k}^{*}$, let $P=(B_1,\dots,B_l)$ be the unique ordered set partition of upper vertices satisfying the following conditions:
\begin{itemize}
    \item Each $B_r$ consists of consecutive elements. In other words, $B_r$ is of the form $B_r=\left\{i,i+1,\dots, j\right\}$.
    \item For $i\in\left\{1,\dots,n_1+\cdots+n_k\right\}$, $i$ is the largest element in some block $B_r$ if and only if $i$ is incident to a marked edge or $i=n_1+\cdots+n_k$.
\end{itemize}
The \emph{upper block index} $\bind_{\mm}^{U}(i)$ of a vertex $i$ is defined to
be the integer $r$ such that $i\in B_r$. Note that $\bind_{\mm}^{U}(i)$ is equal
to one more than the number of vertices incident to marked edges appearing
before $i$. The \emph{lower block index} $\bind_{\mm}^{L}(i)$ is defined
similarly. The \emph{block difference} $\bdiff_{\mm}(e)$ of an edge $e=(i,\pi(i))$ is defined by
$\bdiff_{\mm}(e)=\bind_{\mm}^{L}(\pi(i))-\bind_{\mm}^{U}(i)$. The definitions of
$\bwex(\mm)$ and $\wt(\mm)$ in Definition \ref{defn:stat_PMstar} are indeed
equivalent to those in Definition \ref{defn:stat_Mn} with $\bdiff_{\pi}$
replaced by $\bdiff_{\mm}$.
\end{remark}
\begin{figure}[H]
    \centering
    \resizebox{\linewidth}{!}{
    \begin{tikzpicture}
\pgfsetcornersarced{\pgfpoint{0.5mm}{0.5mm}}
    \tikzmath{\m = 0.15;\mh=0.3;\Mv=0.2;\Mh=0.45;\Mp=8.5;};
    
    \draw (1-\Mh,0-\Mv) rectangle (2+\Mh,1+\Mv);
    \draw (3-\Mh,0-\Mv) rectangle (5+\Mh,1+\Mv);
    \draw (6-\Mh,0-\Mv) rectangle (7+\Mh,1+\Mv);
    
    \filldraw[block] (1-\mh,1-\m) rectangle (2+\mh,1+\m);
    \filldraw[block] (3-\mh,1-\m) rectangle (4+\mh,1+\m);
    \filldraw[block] (5-\mh,1-\m) rectangle (7+\mh,1+\m);
    \filldraw[block] (1-\mh,0-\m) rectangle (1+\mh,0+\m);
    \filldraw[block] (2-\mh,0-\m) rectangle (5+\mh,0+\m);
    \filldraw[block] (6-\mh,0-\m) rectangle (7+\mh,0+\m);

    \draw[unmarked] (1,1) -- (2,0);
    \draw[unmarked] (3,1) -- (4,0);
    \draw[unmarked] (5,1) -- (3,0);
    \draw[unmarked] (6,1) -- (6,0);
    
    \draw[marked] (2,1) -- (1,0);
    \draw[marked] (4,1) -- (7,0);
    \draw[marked] (7,1) -- (5,0);


\foreach \x [count = \xi]  in {1,0,0,1,-1,0,-1}
{
    \fill (\xi,1) circle (1pt);
    \fill (\xi,0) circle (1pt);
    \node at (\xi,1.4) {$\x$};
    }

    \node[scale=0.7] at (2+0.2,1-0.05) {1};
    
    \node[scale=0.7] at (4+0.2,1-0.05) {2};
    \node[scale=0.7] at (7+0.2,1-0.05) {3};
    \node[scale=0.7] at (1+0.2,0-0.05) {1};
    \node[scale=0.7] at (5+0.2,0-0.05) {2};
    \node[scale=0.7] at (7+0.2,0-0.05) {3};
    \draw[thick] (8,0.5) -- (8.5,0.5);
    \draw[thick] (8.5,2) -- (8.5,-1);
    \draw[->,thick] (8.5,2) -- (9.5,2);
    \node at (9,2.3) {$\pi\setminus S$};
    \draw[->,thick] (8.5,-1) -- (9.5,-1);
    \node at (9,-0.7) {$\pi|_S$};
    \node[scale=1] at (2,-0.5) {$\bwex(\mm)=5$};
    \node[scale=1] at (5.5,-0.5) {$\wt(\mm)=2$};
    \node[scale=1] at (5.5,-1) {$\cross(\mm)=0$};
    
    \draw (1-\Mh+9.5,0-\Mv+1.5) rectangle (2+\Mh+9.5,1+\Mv+1.5);
    \draw (3-\Mh+9.5,0-\Mv+1.5) rectangle (5+\Mh+9.5,1+\Mv+1.5);
    \draw (6-\Mh+9.5,0-\Mv+1.5) rectangle (7+\Mh+9.5,1+\Mv+1.5);
    
    \filldraw[block] (1-\mh+9.5,1-\m+1.5) rectangle (2+\mh+9.5,1+\m+1.5);
    \filldraw[block] (3-\mh+9.5,1-\m+1.5) rectangle (4+\mh+9.5,1+\m+1.5);
    \filldraw[block] (5-\mh+9.5,1-\m+1.5) rectangle (7+\mh+9.5,1+\m+1.5);
    \filldraw[block] (1-\mh+9.5,0-\m+1.5) rectangle (1+\mh+9.5,0+\m+1.5);
    \filldraw[block] (2-\mh+9.5,0-\m+1.5) rectangle (5+\mh+9.5,0+\m+1.5);
    \filldraw[block] (6-\mh+9.5,0-\m+1.5) rectangle (7+\mh+9.5,0+\m+1.5);


    \draw[unmarked] (1+9.5,1+1.5) -- (2+9.5,0+1.5);
    \draw[unmarked] (3+9.5,1+1.5) -- (4+9.5,0+1.5);
    \draw[unmarked] (5+9.5,1+1.5) -- (3+9.5,0+1.5);
    \draw[unmarked] (6+9.5,1+1.5) -- (6+9.5,0+1.5);
    

\foreach \x [count = \xi]  in {1,,0,,-1,0,}
{
    \fill (\xi+9.5,1+1.5) circle (1pt);
    \fill (\xi+9.5,0+1.5) circle (1pt);
    \node at (\xi+9.5,1.4+1.5) {$\x$};
    }
    \node[scale=0.7] at (2+0.2+9.5,1-0.05+1.5) {1};
    \node[scale=0.7] at (4+0.2+9.5,1-0.05+1.5) {2};
    \node[scale=0.7] at (7+0.2+9.5,1-0.05+1.5) {3};
    \node[scale=0.7] at (1+0.2+9.5,0-0.05+1.5) {1};
    \node[scale=0.7] at (5+0.2+9.5,0-0.05+1.5) {2};
    \node[scale=0.7] at (7+0.2+9.5,0-0.05+1.5) {3};
    \node[scale=1] at (2.5+9.5,-0.5+1.5) {$\bwex(\pi\setminus S)=3$};
    \node[scale=1] at (5.5+9.5,-0.5+1.5) {$\bwt(\pi\setminus S)=1$};
    \node[scale=1] at (5.5+9.5,-1+1.5) {$\cross(\pi\setminus S)=1$};
    
\foreach \x in {1,2,3}
{
\filldraw[block] (\x-\mh+11.5,1-\m-1.5) rectangle (\x+\mh+11.5,1+\m-1.5);
\filldraw[block] (\x-\mh+11.5,0-\m-1.5) rectangle (\x+\mh+11.5,0+\m-1.5);
}


    \draw[marked] (1+11.5,1-1.5) -- (1+11.5,0-1.5);
    \draw[marked] (2+11.5,1-1.5) -- (3+11.5,0-1.5);
    \draw[marked] (3+11.5,1-1.5) -- (2+11.5,0-1.5);


\foreach \x [count = \xi]  in {0,1,-1}
{
    \fill (\xi+11.5,1-1.5) circle (1pt);
    \fill (\xi+11.5,0-1.5) circle (1pt);
    \node at (\xi+11.5,1.4-1.5) {$\x$};
    }
\foreach \x in {1,2,3}
{
    \node[scale=0.7] at (\x+0.2+11.5,1-0.05-1.5) {$\x$};
    \node[scale=0.7] at (\x+0.2+11.5,0-0.05-1.5) {$\x$};
    }
    \node[scale=1] at (2.5+9.5,-0.5-1.5) {$\bwex(\pi|_S)=2$};
    \node[scale=1] at (5.5+9.5,-0.5-1.5) {$\wt(\pi|_S)=1$};
    \node[scale=1] at (5.5+9.5,-1-1.5) {$\cross(\pi|_S)=1$};
\end{tikzpicture}
    }
    \caption{An example of a marked perfect matching $\mm$ in $\PM_{2,3,2}^{*}$ and its unmarked and marked portions.}
    \label{fig:stat_example}
\end{figure}
\begin{exam}
Figure \ref{fig:stat_example} shows a marked perfect matching $\mm$ in $\PM_{2,3,2}^{*}$ and block indices of its vertices. The block difference of each edge is indicated above its upper endpoint. The statistics $\bwex(\mm)=5$ and $\wt(\mm)=2$ can be computed directly by the notion of block difference in $\mm$, or summing the statistics defined on each $\pi\setminus S$ and $\pi|_S$. For the other statistics of $\mm$, we have $\ee(\mm)=4$ and $\cross(\mm)=0$.
\end{exam}
Under this construction, the linearization coefficient $C(n_1,\dots,n_k)$ is expressed in terms of marked perfect matchings by
\begin{equation}
\label{eq:cancnotfree}
C(n_1,\dots,n_k)=\sum_{\mm\in \PM^{*}_{n_1,\dots,n_k}}(-1)^{\ee(\mm)}y^{\bwex(\mm)}q^{\wt(\mm)+\cross(\mm)}.
\end{equation}
There are many cancellations in this summation. Our goal is to cancel all
negative terms by finding a sign-reversing involution on $\PM^{*}_{n_1,\dots,n_k}$.

Recall that $\DD(n_1,\dots,n_k)\subset S_N$ is the set of
$(n_1,\dots,n_k)$-derangements. The set $\DD(n_1,\dots,n_k)$ can be naturally
identified with the set of marked perfect matchings whose edges are all
inhomogeneous (necessarily marked). To be more precise, let $\sigma$ be a derangement in  $\DD(n_1,\dots,n_k)$. Then we will identify $\sigma$ with the marked perfect matching
$\mm=(\pi,E(\pi))\in \PM^*_{n_1,\dots,n_k}$, where $\pi\in\PM_{N}$ is given by $\pi(i)=\sigma(i)$ for all $i \in [N]$.
Under this identification one can easily check that 
\[
\wex(\sigma)=\wex(\pi) = \bwex(\mm),
\]
\[
\CR(\sigma) = \ov(\pi) = \wt(\pi)-\cross(\pi)=\wt(\mm)+\cross(\mm).
\]
By abuse of notation from now on we will write
\[
\DD(n_1,\dots,n_k) = \{(\pi,S)\in \PM^*_{n_1,\dots,n_k}: E^H(\pi)=\emptyset\}.
\]

Using the above discussion we can
rewrite Theorem~\ref{thm:Main} as follows.

\begin{thm}
\label{thm:Main2}
We have
\[
  C(n_1,\dots,n_k)=\sum_{\mm \in \DD(n_1,\dots,n_k)}
 y^{\bwex(\mm)}q^{\wt(\mm)+\cross(\mm)}.
\]
\end{thm}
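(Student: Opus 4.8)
The plan is to exhibit a sign-reversing, weight-preserving involution $\phi$ on $\PM^{*}_{n_1,\dots,n_k}$ whose fixed points are exactly the elements of $\DD(n_1,\dots,n_k)$. Granting this, the terms of the alternating sum \eqref{eq:cancnotfree} cancel in pairs except at the fixed points; and a marked perfect matching $\mm=(\pi,S)$ lies in $\DD(n_1,\dots,n_k)$ iff $\pi$ has no homogeneous edge, which forces $S=E(\pi)$ and hence $\ee(\mm)=0$, so every surviving term carries sign $+1$. Combined with the identifications $\bwex(\mm)=\wex(\sigma)$ and $\wt(\mm)+\cross(\mm)=\CR(\sigma)$ already recorded above, this yields $C(n_1,\dots,n_k)=\sum_{\mm\in\DD(n_1,\dots,n_k)}y^{\bwex(\mm)}q^{\wt(\mm)+\cross(\mm)}$, which is Theorem~\ref{thm:Main2}.

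The involution will fix $\pi$ and change only the subset $S$: for $\mm=(\pi,S)\notin\DD(n_1,\dots,n_k)$ one singles out a canonical homogeneous edge $e=e(\mm)$ and sets $\phi(\mm)=(\pi,S\,\triangle\,\{e\})$, so that $e$ switches between marked and unmarked. Toggling one edge changes the number of unmarked edges by one, so $(-1)^{\ee(\mm)}$ flips sign as required; and a marked perfect matching is fixed by $\phi$ precisely when it has no homogeneous edge available to toggle, i.e.\ exactly on $\DD(n_1,\dots,n_k)$. The heart of the matter is to choose $e(\mm)$ so that $\phi$ preserves the weight and satisfies $e(\phi(\mm))=e(\mm)$, so that it is genuinely an involution.

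For the weight one computes the effect of marking an edge $e=(a,\pi(a))$ locally: this promotes $a$ and $\pi(a)$ to block separators, so the block difference of an edge $e'$ is raised by $1$ if $e'$ crosses $e$ with upper endpoint $<a$ and lower endpoint $>\pi(a)$, lowered by $1$ if $e'$ crosses $e$ with upper endpoint $>a$ and lower endpoint $<\pi(a)$, and unchanged otherwise, including for $e$ itself; simultaneously $\cross(\mm)$ drops by the total number $c(e)$ of edges crossing $e$, since $e$ moves from the unmarked to the marked part. Writing $f(d)=d$ for $d\ge0$ and $f(d)=-d-1$ for $d<0$ for the contribution of an edge of block difference $d$ to $\wt(\mm)$, a short case check shows that $\bwex(\mm)$ and $\wt(\mm)+\cross(\mm)$ are both preserved exactly when $e$ is \emph{good}, meaning every edge crossing $e$ from the left has nonnegative block difference and every edge crossing $e$ from the right has negative block difference. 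One also notes that marking a good edge leaves the left-crossers at block difference $\ge1$ and the right-crossers at $\le-2$, which is exactly the condition for unmarking $e$ in $\phi(\mm)$ to again preserve the weight, so the notion is symmetric under the toggle.

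It remains to produce, for every $\mm\notin\DD(n_1,\dots,n_k)$, a canonical good homogeneous edge whose choice is unaffected by toggling it, and this is where I expect the real work to be. The useful monotonicity is that a left-crosser of $e$ has block difference $\ge\bdiff_{\mm}(e)$ and a right-crosser has block difference $\le\bdiff_{\mm}(e)$; hence a homogeneous edge with $\bdiff_{\mm}(e)<0$ automatically satisfies the right-crossing condition, and one with $\bdiff_{\mm}(e)\ge0$ automatically satisfies the left-crossing condition. I would therefore split on whether some homogeneous edge has negative block difference, in each case select an extremal homogeneous edge (for instance of minimal block difference, breaking ties by position), and then verify both that it is good and that the block-difference perturbation caused by the toggle — which only touches the crossers of $e$ — leaves it the selected edge. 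Controlling the inhomogeneous crossers, whose block differences are otherwise unconstrained, will be the most delicate point in making the selection rule both well defined and toggle-invariant.
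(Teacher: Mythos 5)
Your reduction of the theorem to a sign-reversing, weight-preserving involution on $\PM^*_{n_1,\dots,n_k}$ with fixed-point set $\DD(n_1,\dots,n_k)$ is exactly the paper's strategy, and your local analysis is also the right one: your notion of a \emph{good} edge coincides with the paper's \emph{convertible} edge (Definition~\ref{defn:convertible}), your computation of how a toggle shifts the block differences of the crossing edges matches \eqref{eq:utm}--\eqref{eq:mtu}, and your monotonicity observation is Proposition~\ref{prop:cr_bdf}.

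The gap is in the step you yourself flag as ``the real work'': actually producing a canonical convertible homogeneous edge whose selection is toggle-invariant. Your plan --- select an extremal homogeneous edge, e.g.\ of minimal block difference with ties broken by position, ``and then verify that it is good'' --- does not go through, because the extremal edge need not be good. Concretely, when some homogeneous edge has negative block difference, the natural candidate $e_i$ with $i$ minimal such that $\bdiff_\mm(e_i)<0$ can be crossed from the left by a homogeneous edge of block difference exactly $0$; since $e_i$ is then forced to be marked, this violates the strict inequality required for goodness (Figure~\ref{fig:Case2b} is such an instance), and no tie-breaking among the negative-difference edges repairs this in general. The paper's resolution is a genuine further idea: it shows that in this situation $e_i$ must be marked (Lemma~\ref{lem:i_is_marked}) and that a homogeneous edge of block difference $0$ crossing $e_i$ from the left must exist (Lemma~\ref{lem:iprime_exists}), and then toggles the \emph{rightmost} such zero-difference edge $e_{i'}$ instead of $e_i$; one must then check both that $e_{i'}$ is convertible and that the non-convertibility of $e_i$ and the maximality of $i'$ survive the toggle, which is the content of Lemmas~\ref{lem:convertible} and~\ref{lem:involution}. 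Without this fallback mechanism your map is undefined on part of $\PM^*_{n_1,\dots,n_k}\setminus\DD(n_1,\dots,n_k)$. A smaller omission of the same kind: in the case where all homogeneous block differences are nonnegative, you need a selection rule guaranteeing that no edge crosses the chosen one from the right (the paper takes the homogeneous edge with leftmost lower endpoint); ``minimal block difference'' does not ensure this either.
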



\subsection{Construction of a sign-reversing involution}
\label{sec:involution_cons}
In order to prove Theorem \ref{thm:Main2}, we give a sign-reversing involution
$\Phi$ on $\PM^*_{n_1,\dots,n_k}$ that preserves the statistics $\bwex$ and
$\wt+\cross$. Indeed, $\Phi$ will be a map that marks or unmarks a single
homogeneous edge, or does not change anything. First we introduce some facts and
definitions that we need to describe the map $\Phi$.

For $\mm=(\pi,S)\in \PM^*_{n_1,\dots,n_k}$, let us observe a change in the block
difference of an edge $e_j$ while marking or unmarking a homogeneous edge $e_i$.
If we mark $e_i$ that was unmarked before, the upper (respectively, lower) index
$\bind_{\mm}^{U}(j)$ (respectively, $\bind_{\mm}^{L}(\pi(j))$) increases by $1$
if and only if $j>i$ (respectively, $\pi(j)>\pi(i)$). Therefore the block
difference $\bdiff_{\mm}(e_j)=\bind_{\mm}^{L}(\pi(j))-\bind_{\mm}^{U}(j)$
changes if and only if $e_j$ crosses $e_i$. More precisely, if $\mm=(\pi,S)$
with $e_i\not\in S$ turns into $\mm'=(\pi,S\cup\left\{e_i\right\})$, then we have
\begin{equation}
\label{eq:utm}
\bdiff_{\mm'}(e_j)=\left\{
\begin{array}{ll}
    \bdiff_{\mm}(e_j) &  \text{if $e_j=e_i$, or $e_j$ and $e_i$ do not cross each other,}\\
    \bdiff_{\mm}(e_j)+1 & \text{if $j<i$ and $\pi(j)>\pi(i)$,}\\
    \bdiff_{\mm}(e_j)-1 & \text{if $j>i$ and $\pi(j)<\pi(i)$.}
\end{array}\right. 
\end{equation}
Conversely, if we unmark a marked edge $e_i\in E^H(\pi)$ so that $\mm=(\pi,S)$
turns into $\mm'=(\pi,S\setminus\left\{e_i\right\})$, then we have
\begin{equation}
\label{eq:mtu}
\bdiff_{\mm'}(e_j)=\left\{
\begin{array}{ll}
    \bdiff_{\mm}(e_j) &  \text{if $e_j=e_i$, or $e_j$ and $e_i$ do not cross each other,}\\
    \bdiff_{\mm}(e_j)-1 & \text{if $j<i$ and $\pi(j)>\pi(i)$,}\\
    \bdiff_{\mm}(e_j)+1 & \text{if $j>i$ and $\pi(j)<\pi(i)$.}
\end{array}\right. 
\end{equation}
From now on, let us adopt an expression \emph{$e_j$ crosses $e_i$ from the left}, or equivalently \emph{$e_i$ crosses $e_j$ from the right} for the relation $j<i$ and $\pi(j)>\pi(i)$. With this observation, we define the \emph{convertibility} of a homogeneous edge, which is a key ingredient of the map $\Phi$.

\begin{defn}
\label{defn:convertible}
Let $\mm=(\pi,S) \in \PM^{*}_{n_1,\dots,n_k}$. An edge $e\in E^H(\pi)$ is said to be \emph{convertible} (in $\mm$) if it satisfies the following conditions.
\begin{enumerate}
    \item If $e$ is unmarked, i.e., $e\notin S$, then for every edge $e'$ that crosses $e$, either
    \begin{itemize}
        \item $e'$ crosses $e$ from the left and $\bdiff_{\mm}(e')\geq0$, or  
        \item $e'$ crosses $e$ from the right and $\bdiff_{\mm}(e')\leq-1$.
    \end{itemize}
    \item If $e$ is marked, i.e., $e\in S$, then for every edge $e'$ that crosses $e$, either
    \begin{itemize}
        \item $e'$ crosses $e$ from the left and $\bdiff_{\mm}(e')>0$, or  
        \item $e'$ crosses $e$ from the right and $\bdiff_{\mm}(e')<-1$.
    \end{itemize}
\end{enumerate}
\end{defn}
Note that if an edge $e\in E^H(\pi)$ is convertible, then the status of other
edges being block weak excedances does not change under the map
$\mm=(\pi,S)\mapsto \mm'=(\pi,S\triangle\{e\})$, where $X\triangle Y$ denotes
the symmetric difference $(X\cup Y)\setminus(X\cap Y)$. In particular, marking
or unmarking a convertible edge preserves the statistic $\bwex$. Note also that
an edge $e$ is convertible in $\mm=(\pi,S)$ if and only if it is convertible in
$\mm'=(\pi,S\triangle\{e\})$.
\begin{remark}
  Suppose that $e'=(i,\pi(i))$ is an inhomogeneous edge of
  $\mm=(\pi,S)\in\M_{n_1,\dots,n_k}$. Then $n_1+\dots+n_{r-1}+1 \le i\le
  n_1+\dots+n_r$ and $n_1+\dots+n_{s-1}+1 \le \pi(i)\le n_1+\dots+n_s$ for some
  $r\ne s$. It is easy to check that the block difference $\bdiff_{\mm}(e')$ is
  nonzero, and its sign is determined by $r$ and $s$. Thus, marking or unmarking
  a homogeneous edge $e\in E^H(\mm)$ does not change the status of whether $e'$ is a block
  weak excedance or not. Therefore, it is sufficient to consider the changes of block
  differences of homogeneous edges when we toggle $e$.
\end{remark}
We are now ready to define the involution $\Phi$.
\begin{defn}\label{Involution}
[The involution $\Phi$]
For $\mm=(\pi,S) \in \PM^{*}_{n_1,\dots,n_k}$, we define $\Phi(\mm)$ as follows.
\begin{spacing}{1.5}

\end{spacing}
\noindent\textbf{Case 0} If $\mm$ has no homogeneous edges, then define $\Phi(\mm)=\mm$. In other words, $\Phi$ is the identity map on $\DD(n_1,\dots,n_k)$.
\begin{spacing}{1.5}

\end{spacing}
\noindent\textbf{Case 1} Suppose $\mm$ has homogeneous edges and $\bdiff_{\mm}(e)\geq0$ for
  all $e \in E^H(\pi)$. Define $\Phi(\mm)=(\pi,S\triangle\left\{e_i\right\})$,
  where $i$ is the integer satisfying $\pi(i)=\min\{\pi(j):$ $e_j\in E^H(\pi)\}$. In other words, we mark or unmark the homogeneous edge
  whose lower endpoint is the leftmost one among the homogeneous edges.
\begin{spacing}{1.5}

\end{spacing}
\noindent\textbf{Case 2} Suppose $\mm$ has homogeneous edges and $\bdiff_{\mm}(e)<0$ for some $e \in E^H(\pi)$. Let $i=\min\left\{j:e_j\in E^H(\pi), \bdiff_{\mm}(e_j)<0\right\}$. Depending on the convertibility of the edge $e_i$, we consider two subcases.
\begin{description}
\item[Subcase 2-(a)] If $e_i$ is convertible, then define $\Phi(\mm)=(\pi,S\triangle\left\{e_{i}\right\})$.
\item [Subcase 2-(b)] If $e_i$ is not convertible, then define
  $\Phi(\mm)=(\pi,S\triangle\left\{e_{i'}\right\})$, where
\[
i'=\max\left\{j<i:e_j\in E^H(\pi), \bdiff_{\mm}(e_j)=0, e_j\mbox{ crosses }e_i\right\}.
\]
\end{description}
\end{defn}
\begin{exam}
  The applications of the map $\Phi$ in Cases 1, 2-(a) and 2-(b) are illustrated
  in Figures \ref{fig:Case1},\ref{fig:Case2a} and \ref{fig:Case2b},
  respectively. Marked edges are represented in dashed lines, and inhomogeneous
  edges are colored in gray. The block differences of homogeneous edges are
  indicated by the numbers above their upper endpoints. The edge chosen by
  $\Phi$ is the thick (dashed) edge.
\end{exam}
\begin{figure}[H]
    \centering
    \begin{tikzpicture}
\pgfsetcornersarced{\pgfpoint{0.5mm}{0.5mm}}
    \tikzmath{\m = 0.15;\mh=0.3;\Mv=0.2;\Mh=0.45;};
    
    \draw (4-\Mh,0-\Mv) rectangle (5+\Mh,1+\Mv);
    \draw (1-\Mh,0-\Mv) rectangle (3+\Mh,1+\Mv);
    \filldraw[block] (1-\mh,1-\m) rectangle (2+\mh,1+\m);
    \filldraw[block] (3-\mh,1-\m) rectangle (4+\mh,1+\m);
    \filldraw[block] (5-\mh,1-\m) rectangle (5+\mh,1+\m);
    \filldraw[block] (1-\mh,0-\m) rectangle (1+\mh,0+\m);
    \filldraw[block] (2-\mh,0-\m) rectangle (4+\mh,0+\m);
    \filldraw[block] (5-\mh,0-\m) rectangle (5+\mh,0+\m);
    \draw[unmarked] (1,1) -- (3,0);
    \draw[marked0] (2,1) -- (4,0);
    \draw[unmarked,ultra thick] (3,1) -- (2,0);
    \draw[marked0] (4,1) -- (1,0);
    \draw[unmarked] (5,1) -- (5,0);
    \node[scale=0.7] at (2+0.2,1-0.05) {1};
    \node[scale=0.7] at (4+0.2,1-0.05) {2};
    \node[scale=0.7] at (5+0.2,1-0.05) {3};
    \node[scale=0.7] at (1+0.2,0-0.05) {1};
    \node[scale=0.7] at (4+0.2,0-0.05) {2};
    \node[scale=0.7] at (5+0.2,0-0.05) {3};
\foreach \x [count = \xi] in {1, ,0, ,0}
{
    \fill (\xi,1) circle (1pt);
    \fill (\xi,0) circle (1pt);
    \node at (\xi,1.4) {$\x$};
}
    \draw[<->] (3,-0.3) edge (3,-1.3);
    \node at (3.3,-0.8) {$\Phi$};
    
    \draw (4-\Mh,0-\Mv-3) rectangle (5+\Mh,1+\Mv-3);
    \draw (1-\Mh,0-\Mv-3) rectangle (3+\Mh,1+\Mv-3);
    \filldraw[block] (1-\mh,1-\m-3) rectangle (2+\mh,1+\m-3);
    \filldraw[block] (3-\mh,1-\m-3) rectangle (3+\mh,1+\m-3);
    \filldraw[block] (4-\mh,1-\m-3) rectangle (4+\mh,1+\m-3);
    \filldraw[block] (5-\mh,1-\m-3) rectangle (5+\mh,1+\m-3);
    \filldraw[block] (1-\mh,0-\m-3) rectangle (1+\mh,0+\m-3);
    \filldraw[block] (2-\mh,0-\m-3) rectangle (2+\mh,0+\m-3);
    \filldraw[block] (3-\mh,0-\m-3) rectangle (4+\mh,0+\m-3);
    \filldraw[block] (5-\mh,0-\m-3) rectangle (5+\mh,0+\m-3);
    \draw[unmarked] (1,1-3) -- (3,0-3);
    \draw[marked0] (2,1-3) -- (4,0-3);
    \draw[marked,ultra thick] (3,1-3) -- (2,0-3);
    \draw[marked0] (4,1-3) -- (1,0-3);
    \draw[unmarked] (5,1-3) -- (5,0-3);
    \node[scale=0.7] at (2+0.2,1-0.05-3) {1};
    \node[scale=0.7] at (3+0.2,1-0.05-3) {2};
    \node[scale=0.7] at (4+0.2,1-0.05-3) {3};
    \node[scale=0.7] at (5+0.2,1-0.05-3) {4};
    \node[scale=0.7] at (1+0.2,0-0.05-3) {1};
    \node[scale=0.7] at (2+0.2,0-0.05-3) {2};
    \node[scale=0.7] at (4+0.2,0-0.05-3) {3};
    \node[scale=0.7] at (5+0.2,0-0.05-3) {4};
\foreach \x [count = \xi] in {2, ,0, ,0}
{
    \fill (\xi,1-3) circle (1pt);
    \fill (\xi,0-3) circle (1pt);
    \node at (\xi,1.4-3) {$\x$};
}
\end{tikzpicture}
    \caption{An example of the map $\Phi$ in Case 1, which toggles the edge $e_3=(3,\overline{2})$.}
    \label{fig:Case1}
\end{figure}
\begin{figure}[H]
    \centering
     \begin{tikzpicture}
\pgfsetcornersarced{\pgfpoint{0.5mm}{0.5mm}}
    \tikzmath{\m = 0.15;\mh=0.3;\Mv=0.2;\Mh=0.45;};
    
    \draw (4-\Mh,0-\Mv) rectangle (7+\Mh,1+\Mv);
    \draw (1-\Mh,0-\Mv) rectangle (3+\Mh,1+\Mv);
    \filldraw[block] (1-\mh,1-\m) rectangle (3+\mh,1+\m);
    \filldraw[block] (4-\mh,1-\m) rectangle (5+\mh,1+\m);
    \filldraw[block] (6-\mh,1-\m) rectangle (6+\mh,1+\m);
    \filldraw[block] (7-\mh,1-\m) rectangle (7+\mh,1+\m);
    \filldraw[block] (1-\mh,0-\m) rectangle (3+\mh,0+\m);
    \filldraw[block] (4-\mh,0-\m) rectangle (4+\mh,0+\m);
    \filldraw[block] (5-\mh,0-\m) rectangle (6+\mh,0+\m);
    \filldraw[block] (7-\mh,0-\m) rectangle (7+\mh,0+\m);
    \draw[unmarked] (1,1) -- (2,0);
    \draw[unmarked] (2,1) -- (1,0);
    \draw[marked0] (3,1) -- (6,0);
    \draw[unmarked] (4,1) -- (5,0);
    \draw[marked] (5,1) -- (7,0);
    \draw[marked, ultra thick] (6,1) -- (4,0);
    \draw[marked0] (7,1) -- (3,0);
    \node[scale=0.7] at (3+0.2,1-0.05) {1};
    \node[scale=0.7] at (5+0.2,1-0.05) {2};
    \node[scale=0.7] at (6+0.2,1-0.05) {3};
    \node[scale=0.7] at (7+0.2,1-0.05) {4};
    \node[scale=0.7] at (3+0.2,0-0.05) {1};
    \node[scale=0.7] at (4+0.2,0-0.05) {2};
    \node[scale=0.7] at (6+0.2,0-0.05) {3};
    \node[scale=0.7] at (7+0.2,0-0.05) {4};
\foreach \x [count = \xi] in {0,0, ,1,2,-1, }
{
    \fill (\xi,1) circle (1pt);
    \fill (\xi,0) circle (1pt);
    \node at (\xi,1.4) {$\x$};
}
    \draw[<->] (3.75,-0.3) edge (3.75,-1.3);
    \node at (4.05,-0.8) {$\Phi$};
    
    \draw (4-\Mh,0-\Mv-3) rectangle (7+\Mh,1+\Mv-3);
    \draw (1-\Mh,0-\Mv-3) rectangle (3+\Mh,1+\Mv-3);
    \filldraw[block] (1-\mh,1-\m-3) rectangle (3+\mh,1+\m-3);
    \filldraw[block] (4-\mh,1-\m-3) rectangle (5+\mh,1+\m-3);
    \filldraw[block] (6-\mh,1-\m-3) rectangle (7+\mh,1+\m-3);
    \filldraw[block] (1-\mh,0-\m-3) rectangle (3+\mh,0+\m-3);
    \filldraw[block] (4-\mh,0-\m-3) rectangle (6+\mh,0+\m-3);
    \filldraw[block] (7-\mh,0-\m-3) rectangle (7+\mh,0+\m-3);
    \draw[unmarked] (1,1-3) -- (2,0-3);
    \draw[unmarked] (2,1-3) -- (1,0-3);
    \draw[marked0] (3,1-3) -- (6,0-3);
    \draw[unmarked] (4,1-3) -- (5,0-3);
    \draw[marked] (5,1-3) -- (7,0-3);
    \draw[unmarked,ultra thick] (6,1-3) -- (4,0-3);
    \draw[marked0] (7,1-3) -- (3,0-3);
    \node[scale=0.7] at (3+0.2,1-0.05-3) {1};
    \node[scale=0.7] at (5+0.2,1-0.05-3) {2};
    \node[scale=0.7] at (7+0.2,1-0.05-3) {3};
    \node[scale=0.7] at (3+0.2,0-0.05-3) {1};
    \node[scale=0.7] at (6+0.2,0-0.05-3) {2};
    \node[scale=0.7] at (7+0.2,0-0.05-3) {3};
\foreach \x [count = \xi] in {0,0, ,0,1,-1, }
{
    \fill (\xi,1-3) circle (1pt);
    \fill (\xi,0-3) circle (1pt);
    \node at (\xi,1.4-3) {$\x$};
}
\end{tikzpicture}
    \caption{An example of the map $\Phi$ in Subcase 2-(a), which toggles the edge $e_6=(6,\overline{4})$.}
    \label{fig:Case2a}
\end{figure}
\begin{figure}[H]
    \centering
    \begin{tikzpicture}
\pgfsetcornersarced{\pgfpoint{0.5mm}{0.5mm}}
    \tikzmath{\m = 0.15;\mh=0.3;\Mv=0.2;\Mh=0.45;};
    
    \draw (3-\Mh,0-\Mv) rectangle (7+\Mh,1+\Mv);
    \draw (1-\Mh,0-\Mv) rectangle (2+\Mh,1+\Mv);
    \filldraw[block] (1-\mh,1-\m) rectangle (2+\mh,1+\m);
    \filldraw[block] (3-\mh,1-\m) rectangle (3+\mh,1+\m);
    \filldraw[block] (4-\mh,1-\m) rectangle (4+\mh,1+\m);
    \filldraw[block] (5-\mh,1-\m) rectangle (6+\mh,1+\m);
    \filldraw[block] (7-\mh,1-\m) rectangle (7+\mh,1+\m);
    \filldraw[block] (1-\mh,0-\m) rectangle (1+\mh,0+\m);
    \filldraw[block] (2-\mh,0-\m) rectangle (3+\mh,0+\m);
    \filldraw[block] (4-\mh,0-\m) rectangle (4+\mh,0+\m);
    \filldraw[block] (5-\mh,0-\m) rectangle (7+\mh,0+\m);
    \draw[unmarked] (1,1) -- (2,0);
    \draw[marked0] (2,1) -- (3,0);
    \draw[marked0] (3,1) -- (1,0);
    \draw[marked] (4,1) -- (7,0);
    \draw[unmarked,ultra thick] (5,1) -- (5,0);
    \draw[marked] (6,1) -- (4,0);
    \draw[unmarked] (7,1) -- (6,0);
    \node[scale=0.7] at (2+0.2,1-0.05) {1};
    \node[scale=0.7] at (3+0.2,1-0.05) {2};
    \node[scale=0.7] at (4+0.2,1-0.05) {3};
    \node[scale=0.7] at (6+0.2,1-0.05) {4};
    \node[scale=0.7] at (7+0.2,1-0.05) {5};
    \node[scale=0.7] at (1+0.2,0-0.05) {1};
    \node[scale=0.7] at (3+0.2,0-0.05) {2};
    \node[scale=0.7] at (4+0.2,0-0.05) {3};
    \node[scale=0.7] at (7+0.2,0-0.05) {4};
\foreach \x [count = \xi] in {1, , ,1,0,-1,-1 }
{
    \fill (\xi,1) circle (1pt);
    \fill (\xi,0) circle (1pt);
    \node at (\xi,1.4) {$\x$};
}
    \draw[<->] (3.75,-0.3) edge (3.75,-1.3);
    \node at (4.05,-0.8) {$\Phi$};
    
    \draw (3-\Mh,0-\Mv-3) rectangle (7+\Mh,1+\Mv-3);
    \draw (1-\Mh,0-\Mv-3) rectangle (2+\Mh,1+\Mv-3);
    \filldraw[block] (1-\mh,1-\m-3) rectangle (2+\mh,1+\m-3);
    \filldraw[block] (3-\mh,1-\m-3) rectangle (3+\mh,1+\m-3);
    \filldraw[block] (4-\mh,1-\m-3) rectangle (4+\mh,1+\m-3);
    \filldraw[block] (5-\mh,1-\m-3) rectangle (5+\mh,1+\m-3);
    \filldraw[block] (6-\mh,1-\m-3) rectangle (6+\mh,1+\m-3);
    \filldraw[block] (7-\mh,1-\m-3) rectangle (7+\mh,1+\m-3);
    \filldraw[block] (1-\mh,0-\m-3) rectangle (1+\mh,0+\m-3);
    \filldraw[block] (2-\mh,0-\m-3) rectangle (3+\mh,0+\m-3);
    \filldraw[block] (4-\mh,0-\m-3) rectangle (4+\mh,0+\m-3);
    \filldraw[block] (5-\mh,0-\m-3) rectangle (5+\mh,0+\m-3);
    \filldraw[block] (6-\mh,0-\m-3) rectangle (7+\mh,0+\m-3);
    \draw[unmarked] (1,1-3) -- (2,0-3);
    \draw[marked0] (2,1-3) -- (3,0-3);
    \draw[marked0] (3,1-3) -- (1,0-3);
    \draw[marked] (4,1-3) -- (7,0-3);
    \draw[marked,ultra thick] (5,1-3) -- (5,0-3);
    \draw[marked] (6,1-3) -- (4,0-3);
    \draw[unmarked] (7,1-3) -- (6,0-3);
    \node[scale=0.7] at (2+0.2,1-0.05-3) {1};
    \node[scale=0.7] at (3+0.2,1-0.05-3) {2};
    \node[scale=0.7] at (4+0.2,1-0.05-3) {3};
    \node[scale=0.7] at (5+0.2,1-0.05-3) {4};
    \node[scale=0.7] at (6+0.2,1-0.05-3) {5};
    \node[scale=0.7] at (7+0.2,1-0.05-3) {6};
    \node[scale=0.7] at (1+0.2,0-0.05-3) {1};
    \node[scale=0.7] at (3+0.2,0-0.05-3) {2};
    \node[scale=0.7] at (4+0.2,0-0.05-3) {3};
    \node[scale=0.7] at (5+0.2,0-0.05-3) {4};
    \node[scale=0.7] at (7+0.2,0-0.05-3) {5};
\foreach \x [count = \xi] in {1,,,2,0,-2,-1 }
{
    \fill (\xi,1-3) circle (1pt);
    \fill (\xi,0-3) circle (1pt);
    \node at (\xi,1.4-3) {$\x$};
}
\end{tikzpicture}
    \caption{An example of the map $\Phi$ in Subcase 2-(b), which toggles the edge $e_5=(5,\overline{5})$.}
    \label{fig:Case2b}
\end{figure}

For the well-definedness of $\Phi$, the only part that is not clear is the
existence of the number $i'$ in Subcase 2-(b), or equivalently, 
\[
\left\{j<i : e_j\in E^H(\pi), \bdiff_{\mm}(e_j)=0, e_j\mbox{ crosses }e_i\right\}\neq\emptyset.
\]
We will prove this in Lemma \ref{lem:iprime_exists}.

Note that except for Case 0, $\Phi$ toggles only one edge's marking status. Hence $\Phi$ is sign-reversing. In the following section, we will prove that $\Phi$ is indeed a well-defined involution that preserves the statistics $\bwex$ and $\wt+\cross$.

\section{Proof of Theorem~\ref{thm:Main2}}\label{sec:proofinvolution}
We start with a simple fact which will be used frequently throughout this section.
\begin{prop}
\label{prop:cr_bdf}
Let $e_i$ and $e_j$ be edges in $\mm=(\pi,S)$ such that $e_i$ crosses $e_j$ from the left, or equivalently, $e_j$ crosses $e_i$ from the right. Then we have
$$
\bdiff_{\mm}(e_i)\geq\bdiff_{\mm}(e_j).
$$
Moreover, the inequality is strict if $e_i\in S$ or $e_j\in S$.
\end{prop}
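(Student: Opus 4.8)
The plan is to unwind the definition of $\bdiff_{\mm}$ into block indices and to use their monotonicity. Write $e_i=(i,\pi(i))$ and $e_j=(j,\pi(j))$; by the convention fixed just before Proposition~\ref{prop:cr_bdf}, the hypothesis that $e_i$ crosses $e_j$ from the left (equivalently, $e_j$ crosses $e_i$ from the right) means precisely $i<j$ and $\pi(i)>\pi(j)$. Directly from $\bdiff_{\mm}(e)=\bind^L_{\mm}(\pi(\cdot))-\bind^U_{\mm}(\cdot)$ one gets
\[
\bdiff_{\mm}(e_i)-\bdiff_{\mm}(e_j)=\bigl(\bind^L_{\mm}(\pi(i))-\bind^L_{\mm}(\pi(j))\bigr)+\bigl(\bind^U_{\mm}(j)-\bind^U_{\mm}(i)\bigr).
\]
The first step is to record the elementary fact, immediate from Remark~\ref{remark:stat_explained}, that $\bind^U_{\mm}$ is weakly increasing in the upper vertex label and $\bind^L_{\mm}$ is weakly increasing in the lower vertex label, since $\bind^U_{\mm}(i)$ equals one plus the number of upper vertices $i'<i$ with $e_{i'}\in S$, and symmetrically $\bind^L_{\mm}(j)$ equals one plus the number of lower vertices $j'<j$ whose incident edge $(\pi^{-1}(j'),j')$ lies in $S$. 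Since $i<j$ and $\pi(i)>\pi(j)$, both parenthesized terms are nonnegative, which already gives $\bdiff_{\mm}(e_i)\ge\bdiff_{\mm}(e_j)$.

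For the strict inequality I would again use these counting descriptions to exhibit an explicit summand equal to $1$. If $e_i\in S$, then the upper vertex $i$ is incident to a marked edge and $i<j$, so $i$ is counted in
\[
\bind^U_{\mm}(j)-\bind^U_{\mm}(i)=\bigl|\{i': i\le i'<j,\ e_{i'}\in S\}\bigr|,
\]
forcing this difference to be at least $1$. If instead $e_j\in S$, then the lower vertex $\pi(j)$ is incident to the marked edge $e_j$ and $\pi(j)<\pi(i)$, so $\pi(j)$ is counted in $\bind^L_{\mm}(\pi(i))-\bind^L_{\mm}(\pi(j))$, which is therefore at least $1$. In either case the displayed sum is strictly positive, so $\bdiff_{\mm}(e_i)>\bdiff_{\mm}(e_j)$.

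The argument is short, and the only real care needed is bookkeeping: one must keep straight that "crosses from the left" pins down which of the two upper labels is the smaller, and one must correctly translate "incident to a marked edge" for a lower vertex $\overline{j}$ as the condition $(\pi^{-1}(j),j)\in S$, so that the half-open intervals in the counting formulas are set up with the intended endpoint included. I do not anticipate any deeper obstacle; the whole statement reduces to monotonicity of the two block-index functions together with the two-case check on which of $e_i$, $e_j$ is marked.
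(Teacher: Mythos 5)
Your proof is correct and follows essentially the same route as the paper: both reduce the claim to the monotonicity of $\bind^U_{\mm}$ and $\bind^L_{\mm}$ (i.e., $\bind^U_{\mm}(i)\le\bind^U_{\mm}(j)$ and $\bind^L_{\mm}(\pi(i))\ge\bind^L_{\mm}(\pi(j))$) and observe that the first (respectively, second) inequality is strict when $e_i\in S$ (respectively, $e_j\in S$). You merely make explicit the counting description of the block indices that the paper leaves implicit.
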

\begin{proof}
By the assumption, we have $i<j$ and
$\pi(i)>\pi(j)$ and the first statement follows from the relations
\begin{align}
\bind_{\mm}^{U}(i)&\leq\bind_{\mm}^{U}(j)\label{eqn:leqU} \quad\mbox{and} \\
\bind_{\mm}^{L}(\pi(i))&\geq\bind_{\mm}^{L}(\pi(j))\label{eqn:leqL}.
\end{align}
Since the inequality \eqref{eqn:leqU} (respectively, \eqref{eqn:leqL}) holds
strictly if $e_i\in S$ (respectively, $e_j\in S$), we obtain the second statement.
\end{proof}
Before proving Theorem~\ref{thm:Main2}, we verify the well-definedness of $\Phi$ in the
following two lemmas. Recall that $\DD(n_1,\dots,n_k)$ is identified with the
set of marked perfect matchings in $\PM^{*}_{n_1,\dots,n_k}$ such that all edges
are inhomogeneous. 
\begin{lem}
\label{lem:i_is_marked}
Let $\mm=(\pi,S) \in \PM^{*}_{n_1,\dots,n_k}\setminus \DD(n_1,\dots,n_k)$. Suppose $e=e_i\in E^H(\pi)$ is not convertible, where $$i=\min\left\{j:e_j\in E^H(\pi),\, \bdiff_{\mm}(e_j)<0\right\}.$$
Then $e$ is a marked edge, i.e., $e\in S$.
\end{lem}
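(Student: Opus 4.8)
The plan is to prove the contrapositive: assuming $e=e_i$ is \emph{unmarked}, I will show that $e_i$ is convertible, i.e.\ that condition~(1) of Definition~\ref{defn:convertible} holds. Concretely, writing $e_i=(i,\pi(i))$, I must check that for every edge $e'=e_j$ that crosses $e_i$, either $e_j$ crosses $e_i$ from the left and $\bdiff_{\mm}(e_j)\ge 0$, or $e_j$ crosses $e_i$ from the right and $\bdiff_{\mm}(e_j)\le -1$. The ``from the right'' half is immediate from Proposition~\ref{prop:cr_bdf}: if $e_j$ crosses $e_i$ from the right, then $e_i$ crosses $e_j$ from the left, so $\bdiff_{\mm}(e_j)\le\bdiff_{\mm}(e_i)$, and since $\bdiff_{\mm}(e_i)$ is a negative integer by the choice of $i$, this gives $\bdiff_{\mm}(e_j)\le -1$.

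For the ``from the left'' half, set $I_c=\{n_1+\dots+n_{c-1}+1,\dots,n_1+\dots+n_c\}$ and recall that $e_i\in E^H(\pi)$ means $i$ and $\pi(i)$ both lie in a single block $I_r$. Suppose $j<i$ and $\pi(j)>\pi(i)$. If $e_j\in E^H(\pi)$, then $j<i$ together with the minimality of $i$ among indices of homogeneous edges with negative block difference forces $\bdiff_{\mm}(e_j)\ge 0$, as required. If $e_j$ is inhomogeneous, let $a$ and $b$ be the indices of the blocks containing $j$ and $\pi(j)$. From $j<i$ and $i\in I_r$ I get $a\le r$; from $\pi(j)>\pi(i)$ and $\pi(i)\in I_r$ I get $b\ge r$; and $a\ne b$ since $e_j$ is inhomogeneous, so $a\le r\le b$ forces $a<b$. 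Using the fact that the sign of the block difference of an inhomogeneous edge is determined by its two block indices (the remark after Definition~\ref{defn:convertible}), $a<b$ yields $\bdiff_{\mm}(e_j)>0$, hence $\bdiff_{\mm}(e_j)\ge 0$. This establishes that $e_i$ is convertible, and the lemma follows by contraposition.

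The one auxiliary fact that needs justification is the sign statement for inhomogeneous edges, which I would either cite from the remark or prove in a line: for each block $I_c$, the number of inhomogeneous edges with upper endpoint in $I_c$ equals the number with lower endpoint in $I_c$ (each equals $n_c$ minus the number of homogeneous edges of $\pi$ inside $I_c$); since every inhomogeneous edge is marked, summing this equality over $c<a$ and over $c<b$, together with the observation that $j$ itself is a marked upper vertex of $I_a$, gives $\bdiff_{\mm}(e_j)=\bind^{L}_{\mm}(\pi(j))-\bind^{U}_{\mm}(j)\ge 1$ when $a<b$, and symmetrically $\le -1$ when $a>b$.

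The main obstacle is not any inequality manipulation — those are one-line appeals to Proposition~\ref{prop:cr_bdf} and to the minimality of $i$ — but rather the block bookkeeping: translating ``$e_j$ crosses $e_i$ from the left'' and ``$e_i$ homogeneous'' into $a\le r\le b$ and correctly deducing $a<b$ (in particular ruling out $a=b$), and applying the sign rule for inhomogeneous edges in the right direction. I expect that to be the only place where care is genuinely needed.
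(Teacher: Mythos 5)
Your proof is correct and follows essentially the same route as the paper's: the paper argues by contradiction, disposing of left-crossing edges via the minimality of $i$ and of right-crossing edges via Proposition~\ref{prop:cr_bdf}, exactly as you do in contrapositive form. The only difference is that you additionally verify the convertibility condition for inhomogeneous crossing edges (with a correct block-counting argument for the sign of their block differences), whereas the paper restricts attention to homogeneous crossers from the outset, as justified by the remark following Definition~\ref{defn:convertible}.
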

\begin{proof}
  Suppose that $e$ is not marked, i.e., $e \notin S$. By the assumption that
  $e$ is not convertible, there are two possibilities:
\begin{itemize}
\item There is an edge $e'\in E^H(\pi)$ such that $e'$ crosses $e$ from the
  left and $\bdiff_{\mm}(e')<0$, or
\item There is an edge $e'\in E^H(\pi)$ such that $e'$ crosses $e$ from the
  right and $\bdiff_{\mm}(e')>-1$.
\end{itemize}
By the minimality of $i$, the first case cannot occur. For the second
case, since $e'$ crosses $e$ from the right and $\bdiff_{\mm}(e)<0$, we have
$\bdiff_{\mm}(e')\leq\bdiff_{\mm}(e)<0$, which contradicts the fact that
$\bdiff_{\mm}(e')>-1$. Therefore, $e$ is a marked edge.
\end{proof}
\begin{lem}
\label{lem:iprime_exists}
Under the same assumptions in Lemma \ref{lem:i_is_marked}, the set
$$
\left\{j<i:e_j\in E^H(\pi), \bdiff_{\mm}(e_j)=0, e_j\mbox{ crosses }e_i\right\}
$$
is not empty.
\end{lem}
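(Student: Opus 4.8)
The plan is to combine the non-convertibility of $e_i$ with Lemma~\ref{lem:i_is_marked} and the comparison of block differences across a crossing given by Proposition~\ref{prop:cr_bdf}. First I would use Lemma~\ref{lem:i_is_marked} to record that $e=e_i$ is a marked edge, so that it is part~(2) of Definition~\ref{defn:convertible} that is relevant to $e_i$. Since $e_i$ is assumed not convertible, the dichotomy in part~(2) fails for some edge crossing $e_i$, and---arguing as in the proof of Lemma~\ref{lem:i_is_marked}, where the Remark following Definition~\ref{defn:convertible} is used to see that such a witnessing edge may be taken homogeneous---there is an edge $e_{j'}\in E^H(\pi)$ satisfying either (i) $e_{j'}$ crosses $e_i$ from the left and $\bdiff_{\mm}(e_{j'})\le 0$, or (ii) $e_{j'}$ crosses $e_i$ from the right and $\bdiff_{\mm}(e_{j'})\ge -1$.

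Next I would rule out possibility (ii). If (ii) holds, then $e_i$ crosses $e_{j'}$ from the left, so Proposition~\ref{prop:cr_bdf} gives $\bdiff_{\mm}(e_i)\ge\bdiff_{\mm}(e_{j'})$, with strict inequality because $e_i\in S$; hence $\bdiff_{\mm}(e_{j'})<\bdiff_{\mm}(e_i)$. As $\bdiff_{\mm}(e_i)<0$ and block differences are integers, $\bdiff_{\mm}(e_i)\le -1$, so $\bdiff_{\mm}(e_{j'})\le -2$, contradicting $\bdiff_{\mm}(e_{j'})\ge -1$. Therefore (i) must hold, so $j'<i$, the edge $e_{j'}\in E^H(\pi)$ crosses $e_i$, and $\bdiff_{\mm}(e_{j'})\le 0$. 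Finally, since $i=\min\{j:e_j\in E^H(\pi),\ \bdiff_{\mm}(e_j)<0\}$ and $j'<i$ with $e_{j'}$ homogeneous, we cannot have $\bdiff_{\mm}(e_{j'})<0$; combined with $\bdiff_{\mm}(e_{j'})\le 0$ this forces $\bdiff_{\mm}(e_{j'})=0$. Hence $j'$ belongs to the set displayed in the statement, which is therefore nonempty.

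The deduction is short, and I do not anticipate a genuine obstacle. The one point requiring care is the same one that appears in Lemma~\ref{lem:i_is_marked}: arranging that the edge witnessing the failure of convertibility is homogeneous, so that both $e_{j'}\in E^H(\pi)$ and the minimality of $i$ can be used, and then tracking the interplay of the strict and weak inequalities (together with integrality of block differences) in the estimate that eliminates case~(ii).
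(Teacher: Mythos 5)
Your argument is correct and follows the paper's proof essentially verbatim: invoke Lemma~\ref{lem:i_is_marked} to see $e_i$ is marked, use Proposition~\ref{prop:cr_bdf} (with strictness from $e_i\in S$) to exclude a witnessing edge crossing $e_i$ from the right, and then apply the minimality of $i$ to upgrade $\bdiff_{\mm}(e_{j'})\le 0$ to equality. The only point of care you flag, taking the witnessing edge homogeneous via the remark after Definition~\ref{defn:convertible}, is handled the same way in the paper.
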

\begin{proof}
  By Lemma \ref{lem:i_is_marked}, $e=e_i$ is marked. By the assumption that $e$
  is not convertible, we have two possible cases:
\begin{itemize}
\item There is an edge $e'\in E^H(\pi)$ such that $e'$ crosses $e$ from the
  left and $\bdiff_{\mm}(e')\le 0$, or
\item There is an edge $e'\in E^H(\pi)$ such that $e'$ crosses $e$ from the
  right and $\bdiff_{\mm}(e')\ge -1$.
\end{itemize}
Suppose an edge $e'\in E^H(\pi)$ crosses $e$ from the right. Then we have
$\bdiff_{\mm}(e')<\bdiff_{\mm}(e)<0$, where the first inequality follows from
the fact that $e$ is marked. Thus the latter case cannot happen. Therefore there
exists an edge $e'$ corresponding to the first case. By the minimality of $i$,
we have $\bdiff_{\mm}(e')=0$ and, therefore, the given set is not empty.
\end{proof}

We now give a proof of Theorem~\ref{thm:Main2} by a sequence of lemmas. The
first objective is to prove that the edge chosen by $\Phi$ to be toggled is
convertible.
\begin{lem}
\label{lem:convertible}
For $\mm=(\pi,S)\in \PM_{n_1,\dots,n_k}^{*}\setminus \DD(n_1,\dots,n_k)$, the edge in $\mm$ toggled by the map $\Phi$ is convertible in $\mm$, i.e., if $\Phi(\mm)=(\pi,S\triangle\left\{e\right\})$, then the edge $e$ is convertible in $\mm$.
\end{lem}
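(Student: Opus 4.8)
The plan is to verify the conditions of Definition~\ref{defn:convertible} for the edge $e$ that $\Phi$ toggles, running through the cases of Definition~\ref{Involution}. In Case~0 no edge is toggled, and in Subcase~2-(a) the toggled edge is convertible by the very definition of that subcase, so only Case~1 and Subcase~2-(b) need argument. A preliminary reduction I would make first: it suffices to check the conditions of Definition~\ref{defn:convertible} only for \emph{homogeneous} edges $e'$ crossing $e$. Indeed, suppose $e'=(a,\pi(a))$ is inhomogeneous with $a\in I_c$, $\pi(a)\in I_d$, and $e'$ crosses $e=e_i\in E^H(\pi)$ with $i,\pi(i)\in I_b$. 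The crossing inequalities force $c\le b\le d$ if $e'$ crosses $e$ from the left and $d\le b\le c$ if from the right; since $e'$ is inhomogeneous, $c\ne d$. Combining this with a short count of marked endpoints lying inside the intervals $I_1,\dots,I_k$ (using that every inhomogeneous edge, in particular $e'$, is marked, so $a$ and $\pi(a)$ are marked endpoints), one gets $\bdiff_{\mm}(e')\ge 1$ in the first case and $\bdiff_{\mm}(e')\le -1$ in the second, and even $\bdiff_{\mm}(e')\le -2$ in the second case when $e$ is marked (using that then $i,\pi(i)$ are also marked endpoints). This is the content of the remark following Definition~\ref{defn:convertible}, and it makes the inhomogeneous crossing edges automatically satisfy the required inequalities.

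For Case~1 the toggled edge is $e=e_i$ with $\pi(i)=\min\{\pi(j):e_j\in E^H(\pi)\}$, and every homogeneous edge has nonnegative block difference. No homogeneous edge crosses $e_i$ from the right: such an edge $e_j$ would have $j>i$ and $\pi(j)<\pi(i)$, contradicting the minimality of $\pi(i)$. If a homogeneous edge $e_j$ crosses $e_i$ from the left then $\bdiff_{\mm}(e_j)\ge 0$ by the Case~1 hypothesis, and if in addition $e_i$ is marked then Proposition~\ref{prop:cr_bdf} upgrades this to $\bdiff_{\mm}(e_j)>\bdiff_{\mm}(e_i)\ge 0$. In both situations the relevant clause of Definition~\ref{defn:convertible} holds, so $e_i$ is convertible.

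For Subcase~2-(b) — which I expect to be the main obstacle — the toggled edge is $e=e_{i'}$, where $i=\min\{j:e_j\in E^H(\pi),\ \bdiff_{\mm}(e_j)<0\}$, the edge $e_i$ is not convertible (hence marked, by Lemma~\ref{lem:i_is_marked}), and $i'$ is the largest index $j<i$ with $e_j\in E^H(\pi)$, $\bdiff_{\mm}(e_j)=0$, and $e_j$ crossing $e_i$; note $e_{i'}$ crosses $e_i$ from the left, so $\pi(i')>\pi(i)$, and $\bdiff_{\mm}(e_{i'})=0$. Homogeneous edges crossing $e_{i'}$ from the left are handled exactly as in Case~1 via Proposition~\ref{prop:cr_bdf} together with $\bdiff_{\mm}(e_{i'})=0$. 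The delicate point is a homogeneous edge $e_h$ crossing $e_{i'}$ from the right ($h>i'$, $\pi(h)<\pi(i')$): Proposition~\ref{prop:cr_bdf} only yields $\bdiff_{\mm}(e_h)\le 0$ (and $\le -1$ if $e_{i'}$ is marked), whereas convertibility demands $\bdiff_{\mm}(e_h)\le -1$ (resp.\ $\le -2$). I would rule out the borderline value by contradiction: if $\bdiff_{\mm}(e_h)$ attains it, then the equality case of Proposition~\ref{prop:cr_bdf} forces $e_h$ — and, in the unmarked subcase, also $e_{i'}$ — to be unmarked; then $e_h$ cannot cross $e_i$ (if it crossed from the right then $e_i$ would cross it from the left and $\bdiff_{\mm}(e_i)\ge\bdiff_{\mm}(e_h)=0$, contradicting $\bdiff_{\mm}(e_i)<0$; if it crossed from the left then $\bdiff_{\mm}(e_h)=0$ would contradict the maximality of $i'$); and finally, examining the marked endpoints that lie between $e_h$ and $e_i$ in the two rows, one extracts a homogeneous marked edge that contradicts either the minimality of $i$ or the maximality of $i'$. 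I expect this last descent to be the technically hardest step, and the cleanest way to organize it is probably to fix a counterexample minimizing a suitable quantity (such as $i-i'$, or $\pi(i')-\pi(h)$) and derive the contradiction from that minimality.
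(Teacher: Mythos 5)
Your handling of Case~1, of Subcase~2-(a), of the left-crossing edges in Subcase~2-(b), and your explicit reduction to homogeneous crossing edges (which the paper delegates to the remark after Definition~\ref{defn:convertible}) all agree with the paper's proof. The problem is the final step of Subcase~2-(b), which is the crux of the whole lemma and which you explicitly leave unexecuted. Having assumed that a homogeneous edge $e_h$ crosses $e'=e_{i'}$ from the right and attains the borderline block difference ($0$ if $e'$ is unmarked, $-1$ if $e'$ is marked), you show that $e_h$ cannot cross $e_i$ and then appeal to ``examining the marked endpoints between $e_h$ and $e_i$'' plus a minimal-counterexample descent to ``extract a homogeneous marked edge'' contradicting the choice of $i$ or $i'$. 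No such third edge is needed, the descent is not needed, and the contradiction you predict is not the one that arises; as written, the argument does not close.

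The step you are missing is to push the borderline assumption through the \emph{equality case} of Proposition~\ref{prop:cr_bdf}. Attaining the borderline forces $\bind_{\mm}^{U}(h)=\bind_{\mm}^{U}(i')$ and $\bind_{\mm}^{L}(\pi(h))=\bind_{\mm}^{L}(\pi(i'))$ when $e'$ is unmarked (and $\bind_{\mm}^{U}(h)=\bind_{\mm}^{U}(i')+1$, $\bind_{\mm}^{L}(\pi(h))=\bind_{\mm}^{L}(\pi(i'))$ when $e'$ is marked), i.e.\ there is essentially no marked upper vertex in $[i',h)$ and no marked lower vertex in $[\pi(h),\pi(i'))$. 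But $e_i$ is marked (Lemma~\ref{lem:i_is_marked}) and crosses $e'$ from the right, so $i>i'$ and $\pi(i)<\pi(i')$ are marked vertices; the block-index equalities therefore force $h\le i$ and $\pi(h)>\pi(i)$, and since $e_h$ must be unmarked at the borderline while $e_i$ is marked, $h<i$. Hence $e_h$ \emph{does} cross $e_i$ from the left, with $i'<h<i$ and $\bdiff_{\mm}(e_h)=0$ in the unmarked subcase (contradicting the maximality of $i'$) or $\bdiff_{\mm}(e_h)=-1<0$ in the marked subcase (contradicting the minimality of $i$). In other words, the correct conclusion from the borderline assumption is precisely the statement you proved impossible; your proposal stops one observation short of combining the two, and instead heads toward an unnecessary and unworked descent.
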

\begin{proof}
We consider each case in Definition~\ref{Involution} except for Case 0,
which does not occur since $\mm\not\in \DD(n_1,\dots,n_k)$.
\begin{spacing}{1.5}

\end{spacing}
\noindent\textbf{Case 1} Suppose $\mm$ has homogeneous edges and $\bdiff_{\mm}(e)\geq0$ for all $e \in E^H(\pi)$. Then $\Phi(\mm)=(\pi,S\triangle\left\{e_i\right\})$ for the integer $i$ satisfying $\pi(i)=\min\left\{\pi(j):e_j\in E^H(\pi)\right\}$.  Suppose an edge $e\in E^H(\pi)$ crosses $e_i$. By the minimality of $\pi(i)$, $e$ must cross $e_i$ from the left. By Proposition~\ref{prop:cr_bdf}, we have  $\bdiff_{\mm}(e_i)\leq\bdiff_{\mm}(e)$ if $e_i$ is unmarked and  $\bdiff_{\mm}(e_i)<\bdiff_{\mm}(e)$ if $e_i$ is marked. Since  $\bdiff_{\mm}(e_i)\ge0$ we conclude that $e_i$ is convertible. 
\begin{spacing}{1.5}

\end{spacing}
\noindent\textbf{Case 2} Suppose $\mm$ has homogeneous edges and $\bdiff_{\mm}(e)<0$ for
  some $e \in E^H(\pi)$. Let 
$$
i=\min\{j:e_j\in E^H(\pi),\bdiff_{\mm}(e_j)<0\}.
$$
There are two subcases.
\begin{description}[]
\item[Subcase 2-(a)] If $e_i$ is convertible, then
  $\Phi(\mm)=(\pi,S\triangle\left\{e_i\right\})$. In this case $e_i$ is
  convertible by the assumption.
\item[Subcase 2-(b)] If $e_i$ is not convertible, then
  $\Phi(\mm)=(\pi,S\triangle\left\{e_{i'}\right\})$, where
    $$
    i'=\max\left\{j<i:e_j\in E^H(\pi), \bdiff_{\mm}(e_j)=0, e_j \text{ crosses } e_i\right\}.
    $$
    Let $e=e_i$ and $e'=e_{i'}$. To check the convertibility of $e'$, let
    $e''=e_{i''}\in E^H(\pi)$ be an edge that crosses $e'$.
    
    If $e''$ crosses $e'$ from the left, then we have
    $\bdiff_{\mm}(e'')\geq\bdiff_{\mm}(e')=0$, where the inequality is strict if
    $e'$ is marked. Now suppose that $e''$ crosses $e'$ from the right. Then we have
    $\bdiff_{\mm}(e'')\leq\bdiff_{\mm}(e')=0$, where the inequality is strict if
    $e'$ is marked. For the convertibility of $e'$ we have to show that the
    inequality $\bdiff_{\mm}(e'')\leq -1$ (respectively, $\bdiff_{\mm}(e'')<-1$)
    holds if $e'\notin S$ (respectively, $e'\in S$). We consider the two cases
    $e'\not\in S$ and $e\in S$ as follows.
    \begin{description}
    \item [$\bm{e'\notin S}$] Suppose $\bdiff_{\mm}(e'')=\bdiff_{\mm}(e')=0$. By
      the argument in the proof of Proposition \ref{prop:cr_bdf}, it follows
      that $\bind_{\mm}^{U}(i')=\bind_{\mm}^{U}(i'')$ and
      $\bind_{\mm}^{L}(\pi(i'))=\bind_{\mm}^{L}(\pi(i''))$. On the other hand,
      the edge $e$ is marked and it crosses $e'$ from the right, by the choice
      of $e'$ and Lemma \ref{lem:i_is_marked}. From these facts, we must have
      $i'<i''<i$ and $\pi(i')>\pi(i'')>\pi(i)$. Thus, $e''$ crosses $e$ from the
      left and $\bdiff_{\mm}(e'')=0$, but this contradicts the maximality of $i'$.
      Therefore we have $\bdiff_{\mm}(e'')<\bdiff_{\mm}(e')$ and
      $\bdiff_{\mm}(e'')\leq -1$.
    \item [$\bm{e'\in S}$] Suppose $\bdiff_{\mm}(e'')=\bdiff_{\mm}(e')-1=-1$. By
      the same argument in the case $e'\notin S$, we have
      $\bind_{\mm}^{U}(i')=\bind_{\mm}^{U}(i'')-1$,
      $\bind_{\mm}^{L}(\pi(i'))=\bind_{\mm}^{L}(\pi(i''))$, $i'<i''\leq i$
      and $\pi(i')>\pi(i'')\geq\pi(i)$. By the minimality of $i$, we must have
      $i''=i$, but then
      $\bind_{\mm}^{L}(\pi(i'))>\bind_{\mm}^{L}(\pi(i))=\bind_{\mm}^{L}(\pi(i''))$
      also follows, which contradicts 
      $\bind_{\mm}^{L}(\pi(i'))=\bind_{\mm}^{L}(\pi(i''))$. Hence we have
      $\bdiff_{\mm}(e'')<\bdiff_{\mm}(e')-1=-1$.
\end{description}
\end{description}
\end{proof}

\begin{lem}
\label{lem:involution}
The map $\Phi$ is an involution, i.e., $\Phi^2=\ID$.
\end{lem}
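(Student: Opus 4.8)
The plan is to reduce the statement to a single claim about the edge selected by $\Phi$. If $\mm\in\DD(n_1,\dots,n_k)$, then $\Phi(\mm)=\mm$ by Case~0 and there is nothing to prove, so assume $\mm=(\pi,S)\notin\DD(n_1,\dots,n_k)$. By Lemma~\ref{lem:convertible}, $\Phi$ toggles a single convertible edge $e$, i.e.\ $\Phi(\mm)=\mm'\coloneqq(\pi,S\triangle\{e\})$. Since $\mm'$ has the same underlying perfect matching $\pi$, it still possesses a homogeneous edge and hence lies outside $\DD(n_1,\dots,n_k)$, and $e$ is convertible in $\mm'$ by the observation following Definition~\ref{defn:convertible}. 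Thus it suffices to show that, when $\Phi$ is applied to $\mm'$, it again selects $e$ to toggle; then $\Phi(\mm')=(\pi,S\triangle\{e\}\triangle\{e\})=(\pi,S)=\mm$.

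To prove this claim I would track how the block differences of the homogeneous edges change when passing from $\mm$ to $\mm'$, using the formulas \eqref{eq:utm} and \eqref{eq:mtu}, together with the sign restrictions on the edges crossing $e$ that are already recorded inside the proof of Lemma~\ref{lem:convertible}. The argument then splits according to which case of Definition~\ref{Involution} applied to $\mm$. If $\mm$ fell under Case~1, write $e=e_i$ with $\pi(i)=\min\{\pi(j):e_j\in E^H(\pi)\}$; minimality of $\pi(i)$ forces that no homogeneous edge crosses $e_i$ from the right, so toggling $e_i$ only shifts the block differences of the homogeneous edges crossing it from the left, and convertibility of $e_i$ keeps all of these (and $e_i$ itself) at nonnegative block difference. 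Hence $\mm'$ again falls under Case~1 with the same minimizing lower endpoint, so $\Phi(\mm')$ toggles $e_i$. If $\mm$ fell under Subcase~2-(a), write $e=e_i$ with $i$ the least index of a homogeneous edge of negative block difference; then $\bdiff_{\mm'}(e_i)=\bdiff_{\mm}(e_i)<0$, while convertibility keeps every homogeneous $e_j$ with $j<i$ at nonnegative block difference, so $i$ remains the least such index and $e_i$ remains convertible, and $\Phi(\mm')$ again lands in Subcase~2-(a) toggling $e_i$.

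The remaining and most delicate case is Subcase~2-(b), where $e=e_{i'}$, with $i$ the least index of a homogeneous edge of negative block difference, $e_i$ not convertible (hence marked, by Lemma~\ref{lem:i_is_marked}), and $i'=\max\{j<i:e_j\in E^H(\pi),\ \bdiff_{\mm}(e_j)=0,\ e_j\text{ crosses }e_i\}$. Here one exploits the sharper facts established inside the proof of Lemma~\ref{lem:convertible}: every homogeneous edge crossing $e_{i'}$ from the right has block difference at most $-1$ in $\mm$ (and strictly less than $-1$ when $e_{i'}$ is marked). Using these one checks in $\mm'$ that no homogeneous edge of index below $i$ acquires a negative block difference, so the least ``negative'' index is still $i$; that $e_i$ is still not convertible, because $e_{i'}$ now has block difference $0$ and still crosses $e_i$ from the left; and that $i'$ is still the largest index of a homogeneous edge of block difference $0$ crossing $e_i$ from the left, since any competitor strictly between $i'$ and $i$ in $\mm'$ would, by the sign restrictions, already have been a competitor in $\mm$, contradicting the maximality of $i'$. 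Consequently $\Phi(\mm')$ again falls under Subcase~2-(b) and toggles $e_{i'}$.

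The main obstacle is precisely this last case: one must simultaneously control that the ``first negative'' index does not drop below $i$ and that $i'$ does not change, and both hinge on the exact sign bounds for the edges crossing the toggled edge that come out of the proof of Lemma~\ref{lem:convertible}; moreover Subcase~2-(b) itself bifurcates according to whether $e_{i'}$ is marked or unmarked in $\mm$, so one runs these two computations in parallel and verifies they reach the same conclusion. By comparison, Case~0, Case~1 and Subcase~2-(a) amount to routine bookkeeping with \eqref{eq:utm} and \eqref{eq:mtu}.
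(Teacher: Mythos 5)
Your proposal is correct and follows essentially the same route as the paper's proof: reduce to showing that $\Phi$ applied to $\mm'=(\pi,S\triangle\{e\})$ selects the same edge $e$, then check Cases 1, 2-(a), 2-(b) separately, using the convertibility sign bounds to see that the sign class of each homogeneous edge's block difference is preserved, that $e_{i'}$ keeps $e_i$ non-convertible, and that the maximality of $i'$ survives the toggle. The details you sketch for Subcase 2-(b) match the paper's argument (the paper derives the contradiction via Proposition~\ref{prop:cr_bdf} where you invoke the convertibility restrictions, but these are the same inequalities).
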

\begin{proof}
  Let $\mm=(\pi,S) \in \PM_{n_1,\dots,n_k}^{*}$ be mapped to $\mm' $ by $\Phi$.
  We will show that $\Phi(\mm')=\mm$ in each case of Definition~\ref{Involution},
  where Case 0 is trivial.
\begin{spacing}{1.5}

\end{spacing}
\noindent\textbf{Case 1} Suppose $\mm$ has homogeneous edges and $\bdiff_{\mm}(e)\geq0$ for
  all $e \in E^H(\pi)$. Then $\mm'=\Phi(\mm)=(\pi,S\triangle\left\{e_i\right\})$
  for the integer $i$ satisfying $\pi(i)=\min\left\{\pi(j):e_j\in
    E^H(\pi)\right\}$. Since $e_i$ is convertible by
  Lemma~\ref{lem:convertible}, we also have $\bdiff_{\mm'}(e)\ge 0$ for all $e \in
  E^H(\pi)$. That is, when we apply $\Phi$ to $\mm'$ we are still in Case 1.
  Since the edge $e_i$ toggled by $\Phi$ depends only on $\pi$, the map $\Phi$
  toggles the same edge $e_i$ in $\mm'$ and we have $\Phi(\mm')=\mm$.
\begin{spacing}{1.5}

\end{spacing}
\noindent\textbf{Case 2} Suppose $\mm$ has homogeneous edges and $\bdiff_{\mm}(e)<0$ for
  some $e \in E^H(\pi)$. Let 
$$
i=\min\{j:e_j\in E^H(\pi),\bdiff_{\mm}(e_j)<0\}.
$$
Note that in this case, the set of edges having negative block difference is
invariant under $\Phi$ by the convertibility of the edge toggled by $\Phi$.
Hence when we apply $\Phi$ to $\mm'$ we are still in Case 2 and the same index
$i$ satisfies 
$$
i=\min\{j:e_j\in E^H(\pi),\bdiff_{\mm'}(e_j)<0\}.
$$
Now we consider the following two subcases.

\begin{description}[listparindent=\parindent]
\item [Subcase 2-(a)] If $e_i$ is convertible, then
  $\Phi(\mm)=(\pi,S\triangle\left\{e_i\right\})$. Since $e_i$ is also convertible in
  $\mm'$, we have $\Phi(\mm')=\mm$.
\item [Subcase 2-(b)] If $e_i$ is not convertible, then
  $\Phi(\mm)=(\pi,S\triangle\left\{e_{i'}\right\})$, where
    $$
    i'=\max\left\{j<i:e_j\in E^H(\pi), \bdiff_{\mm}(e_j)=0, e_j \text{ crosses } e_i\right\}.
    $$
Let $e=e_i$ and $e'=e_{i'}$. To show $\Phi(\mm')=\mm$, it suffices to show the
following two properties:
\begin{itemize}
    \item $e$ is not convertible in $\mm'$, i.e., when we apply $\Phi$ to $\mm'$
      we are still in Subcase 2-(b).
    \item The map $\Phi$ toggles the same edge $e'$ in $\mm'$, i.e.,
      \begin{equation}
        \label{eq:2}
        i'=\max\left\{j<i : e_j\in E^H(\pi),\bdiff_{\mm'}(e_j)=0,e_j
          \text{ crosses } e_i \right\}.
      \end{equation}
\end{itemize}

For the first part, recall that toggling $e'$ preserves the block difference of
itself. That is, $\bdiff_{\mm}(e')=\bdiff_{\mm'}(e')=0$. Therefore $e$ is still
not convertible in $\mm'$ due to the presence of $e'$.

For the second part, we will prove \eqref{eq:2}. Assume that there is an edge
$e''=e_{i''}\in E^H(\pi)$ such that $i'<i''<i$, $\bdiff_{\mm'}(e'')=0$ and $e''$
crosses $e$. In $\mm$, we must have $\bdiff_{\mm}(e'')>0$ by the choice of the
indices $i$ and $i'$ in $\mm$. Since $\mm$ and $\mm'$ only differ by a mark on
$e'$, the only possible case is that $\bdiff_{\mm}(e'')=1$, $e'$ crosses $e''$
from left and $e'$ is marked in $\mm$. Hence we must have
$\bdiff_{\mm}(e')>\bdiff_{\mm}(e'')=1$, but this contradicts the assumption
$\bdiff_{\mm}(e')=0$. Therefore, there is no such edge $e''$ and \eqref{eq:2}
holds.
\end{description}
\end{proof}
\begin{lem}
\label{lem:bwex}
The map $\Phi$ preserves the block weak excedances, i.e., $$\bwex(\mm)=\bwex(\Phi(\mm)).$$
\end{lem}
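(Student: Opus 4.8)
The plan is to derive this from Lemma~\ref{lem:convertible} together with the block-difference update rules \eqref{eq:utm} and \eqref{eq:mtu}. If $\mm\in\DD(n_1,\dots,n_k)$, then $\Phi(\mm)=\mm$ by Case~0 and the claim is trivial, so I would assume $\mm\notin\DD(n_1,\dots,n_k)$ and write $\Phi(\mm)=\mm'=(\pi,S\triangle\{e_i\})$. By Lemma~\ref{lem:convertible} the toggled edge $e_i$ is convertible in $\mm$. Since $\bwex$ of a marked perfect matching is, by Remark~\ref{remark:stat_explained}, the number of edges $e'$ of $\pi$ with $\bdiff(e')\ge 0$, it suffices to show that for every edge $e'$ of $\pi$ the condition $\bdiff_{\mm}(e')\ge 0$ and the condition $\bdiff_{\mm'}(e')\ge 0$ have the same truth value.

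To carry this out I would split the edges of $\pi$ into three groups. First, the toggled edge itself: by the case $e_j=e_i$ in \eqref{eq:utm} (or \eqref{eq:mtu}) we have $\bdiff_{\mm'}(e_i)=\bdiff_{\mm}(e_i)$, so its status is unchanged. Second, any edge $e'$ that does not cross $e_i$: again by \eqref{eq:utm}, \eqref{eq:mtu} its block difference is unchanged, so its status is unchanged. Third, an edge $e'$ that crosses $e_i$ (note that the convertibility conditions in Definition~\ref{defn:convertible} are quantified over \emph{all} edges crossing $e_i$, so it is irrelevant whether $e'$ is homogeneous). Here I would use convertibility directly: if $e_i$ is unmarked in $\mm$ and $e'$ crosses $e_i$ from the left, then $\bdiff_{\mm}(e')\ge 0$ and \eqref{eq:utm} gives $\bdiff_{\mm'}(e')=\bdiff_{\mm}(e')+1\ge 1$; if $e'$ crosses from the right, then $\bdiff_{\mm}(e')\le -1$ and $\bdiff_{\mm'}(e')=\bdiff_{\mm}(e')-1\le -2$. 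Symmetrically, if $e_i$ is marked in $\mm$ and $e'$ crosses from the left, convertibility gives $\bdiff_{\mm}(e')\ge 1$ and \eqref{eq:mtu} gives $\bdiff_{\mm'}(e')\ge 0$, while if $e'$ crosses from the right, $\bdiff_{\mm}(e')\le -2$ and $\bdiff_{\mm'}(e')\le -1$. In each of these four cases $\bdiff(e')\ge 0$ holds in $\mm$ iff it holds in $\mm'$, so no edge changes its status and $\bwex(\mm)=\bwex(\mm')$.

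I do not expect a real obstacle in this lemma; it is essentially a corollary of Lemma~\ref{lem:convertible} and the update formulas, and the only point needing care is to keep the bookkeeping honest, namely to check that \eqref{eq:utm}/\eqref{eq:mtu} leave the block difference of the toggled edge and of every non-crossing edge untouched (so the verification really does reduce to crossing edges), and to pair each inequality in Definition~\ref{defn:convertible} with the correct direction of change. Alternatively, one may restrict attention to homogeneous crossing edges by invoking the Remark following Definition~\ref{defn:convertible}, which records that an inhomogeneous edge never changes its block-weak-excedance status when a homogeneous edge is toggled; either route gives the same conclusion.
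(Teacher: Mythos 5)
Your proposal is correct and follows the paper's route exactly: the paper disposes of this lemma in one line as "an immediate consequence of Lemma~\ref{lem:convertible}," the substance being the observation recorded right after Definition~\ref{defn:convertible} that toggling a convertible edge changes no edge's block-weak-excedance status. Your write-up just makes that observation explicit via \eqref{eq:utm}, \eqref{eq:mtu} and the four convertibility cases, all of which you apply correctly.
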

\begin{proof}
It is an immediate consequence of Lemma \ref{lem:convertible}.
\end{proof}

\begin{lem}
\label{lem:wt+cr}
The map $\Phi$ preserves the sum of the weight and the number of crossings, i.e.,
$$\wt(\mm)+\cross(\mm)=\wt(\Phi(\mm))+\cross(\Phi(\mm)).$$
\end{lem}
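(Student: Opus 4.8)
The plan is to compute exactly how $\wt$ and $\cross$ change when $\Phi$ toggles a single edge. Write $e^{*}$ for the edge that $\Phi$ toggles (Case~0 is trivial, since then $\Phi(\mm)=\mm$); by Lemma~\ref{lem:convertible}, $e^{*}$ is convertible in $\mm$, and by the observation following Definition~\ref{defn:convertible} it is convertible in $\Phi(\mm)$ as well. Since $\Phi$ is an involution (Lemma~\ref{lem:involution}), we may replace $\mm$ by $\Phi(\mm)$ if necessary and thereby assume $e^{*}\notin S$ and $\Phi(\mm)=\mm'\coloneqq(\pi,S\cup\{e^{*}\})$; note that $\pi$ is unchanged, so in particular the set of edges crossing $e^{*}$ is the same in $\mm$ and $\mm'$. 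Let $c$ denote the number of edges of $\pi$ crossing $e^{*}$, and let $c_u$ (resp.\ $c_m$) be the number of those that are unmarked (resp.\ marked) in $\mm$, so $c=c_u+c_m$.

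For the change in $\cross$, recall $\cross(\mm)=\cross(\pi\setminus S)-\cross(\pi|_S)$. Passing from $\mm$ to $\mm'$ deletes $e^{*}$ from the unmarked portion, which removes precisely its $c_u$ crossings with the other unmarked edges, and inserts $e^{*}$ into the marked portion, which creates precisely its $c_m$ crossings with the marked edges (an order-preserving relabelling preserves crossings). Hence $\cross(\mm')-\cross(\mm)=-c_u-c_m=-c$.

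For the change in $\wt$, I would use Remark~\ref{remark:stat_explained}, which gives $\wt(\mm)=\sum_{e\in E(\pi)}f(\bdiff_{\mm}(e))$ with $f(d)=d$ for $d\ge 0$ and $f(d)=-d-1$ for $d<0$. By \eqref{eq:utm}, marking $e^{*}$ leaves $\bdiff_{\mm}(e^{*})$ and $\bdiff_{\mm}(e)$ for every $e$ not crossing $e^{*}$ unchanged, increases $\bdiff_{\mm}(e)$ by $1$ for each edge $e$ crossing $e^{*}$ from the left, and decreases it by $1$ for each edge crossing from the right. Convertibility of the unmarked edge $e^{*}$ forces $\bdiff_{\mm}(e)\ge 0$ for every left-crossing $e$ and $\bdiff_{\mm}(e)\le -1$ for every right-crossing $e$; since $f(d+1)-f(d)=1$ for all $d\ge 0$ and $f(d-1)-f(d)=1$ for all $d\le -1$, each of the $c$ crossing edges contributes exactly $+1$, so $\wt(\mm')-\wt(\mm)=c$. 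Adding the two identities gives $\wt(\mm')+\cross(\mm')=\wt(\mm)+\cross(\mm)$.

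The step I expect to require the most care is the last one: checking that $f$ increases by \emph{exactly} $1$ on every crossing edge using precisely the inequalities packaged into Definition~\ref{defn:convertible}. In particular one must verify the two borderline cases — a left-crossing edge with block difference $0$ (moving to $1$) and a right-crossing edge with block difference $-1$ (moving to $-2$), where a naive bound could fail — and observe that the inhomogeneous crossing edges, which are automatically marked, are also covered because convertibility constrains \emph{every} crossing edge, not only the homogeneous ones. Everything else is routine bookkeeping with the definitions of $\pi\setminus S$, $\pi|_S$, and \eqref{eq:utm}.
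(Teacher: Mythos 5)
Your proposal is correct and follows essentially the same route as the paper's proof: toggle the single convertible edge, use the convertibility inequalities to show that $\wt$ changes by exactly the number of crossing edges (with the right sign), and use the signed interpretation of $\cross(\mm)=\cross(\pi\setminus S)-\cross(\pi|_S)$ to show $\cross$ changes by the same amount with the opposite sign. The only (immaterial) differences are that you normalize to the unmarked case where the paper normalizes to the marked case, and that you make the bookkeeping via the function $f$ slightly more explicit.
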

\begin{proof}
  If m is fixed by $\Phi$, then the assertion is clear. Assume that an edge $e$ of $\mm$ is toggled by $\Phi$.  We may also assume that $e$ is marked. The block difference of an edge changes after unmarking $e$ if and only if the edge crosses $e$. Let $e'$ (respectively, $e''$) be an edge that crosses $e$ from the left (respectively, right). Then unmarking $e$ decreases $\bdiff_\mm(e')$ by 1 and increases $\bdiff_\mm(e'')$ by 1. Since $e$ is convertible, we must have $\bdiff_\mm(e')>0$ and $\bdiff_\mm(e'')<-1$, and $\wt(\Phi(\mm))$ is equal to $\wt(\mm)$ subtracted by the number of edges that
  cross $e$.

  On the other hand, by definition of  $\cross(\mm)=\cross(\pi\setminus S)-\cross(\pi|_S)$, we actually count a crossing of pair of unmarked edges as $+1$, a crossing of pair of marked edges as $-1$ and a crossing of pair of a unmarked edge and a marked edge as $0$. Thus, when we toggle $e$ from marked to unmarked, for every marked (respectively, unmarked) edge that crosses $e$ its contribution to $\cross(\mm)$ changes from $-1$ to $0$
  (respectively, from $0$ to $1$). Therefore $\cross(\Phi(\mm))$ is equal to the
  sum of $\cross(\mm)$ and the number of edges that cross $e$. This together
  with the result in the above paragraph implies the conclusion.
\end{proof}

Finally we give a proof of Theorem~\ref{thm:Main2}.

\begin{proof}[Proof of Theorem~\ref{thm:Main2}]
  Recall from \eqref{eq:cancnotfree} that we have
  \[
C(n_1,\dots,n_k) =\sum_{\mm\in \PM^{*}_{n_1,\dots,n_k}}(-1)^{\ee(\mm)}y^{\bwex(\mm)}q^{\wt(\mm)+\cross(\mm)}.
  \]
  Lemmas~\ref{lem:involution}, \ref{lem:bwex} and \ref{lem:wt+cr} imply
  that $\Phi$ is a sign-reversing and weight-preserving involution on
  $\PM^{*}_{n_1,\dots,n_k}$ with fixed point set $\DD(n_1,\dots,n_k)$.
Thus
  \[
    C(n_1,\dots,n_k) =\sum_{\mm\in
      \DD(n_1,\dots,n_k)}(-1)^{\ee(\mm)}y^{\bwex(\mm)}q^{\wt(\mm)+\cross(\mm)}.
  \]
  If $\mm=(\pi,S)\in \DD(n_1,\dots,n_k)$, then $S=E(\pi)$ and therefore
  $\ee(\mm)=0$. Thus we obtain the desired formula. 
\end{proof}

\section{Further study}
\label{sec:further-study}

Pan and Zeng \cite{PanZeng} introduced the $q$-Laguerre polynomials
$L^{(\alpha)}_n(x;y,q)$ with an additional parameter $\alpha$ defined by
$L^{(\alpha)}_{-1}(x;y,q)=0$, $L^{(\alpha)}_0(x;y,q)=1$, and for $n\ge1$,
\[
  L^{(\alpha)}_{n+1}(x;y,q) = (x-(y([n+\alpha+1]_q+[n]_q)))L^{\alpha}_n(x;y,q)
  -y[n]_q[n+\alpha]_q L^{(\alpha)}_{n-1}(x;y,q).
\]
They found combinatorial interpretations for the polynomials
$L^{(\alpha)}_n(x;y,q)$ and their moments. They also showed that if $\alpha$ is
a nonnegative integer then the linearization coefficient
\begin{equation}
  \label{eq:alpha}
\LL_{\alpha,y,q}(L^{(\alpha)}_{n_1}(x;y,q)\dots L^{(\alpha)}_{n_k}(x;y,q))  
\end{equation}
is a polynomial in $y$ and $q$ with nonnegative integer coefficients, where
$\LL_{\alpha,y,q}$ is the linear functional for the orthogonal polynomials
$L^{(\alpha)}_n(x;y,q)$.

If $\alpha=0$, then $L^{(0)}_n(x;y,q)$ is the $q$-Laguerre polynomial
$L_n(x;q,y)$ that we considered in this paper. Therefore,
\begin{equation}
  \label{eq:alpha=0}
  \LL_{0,y,q}(L^{(0)}_{n_1}(x;y,q)\dots L^{(0)}_{n_k}(x;y,q))
  = \sum_{\sigma\in \mathcal{D}
  (n_1,\dots,n_k)} y^{\wex(\sigma)}q^{\CR(\sigma)}. 
\end{equation}
If $q=1$, we have
\begin{equation}
  \label{eq:q=1}
  \LL_{\alpha,y,1}(L^{(\alpha)}_{n_1}(x;y,1)\dots L^{(\alpha)}_{n_k}(x;y,1))
  = \sum_{\sigma\in \mathcal{D}
  (n_1,\dots,n_k)} y^{\wex(\sigma)} (\alpha+1)^{\mathrm{cyc}(\sigma)},
\end{equation}
where $\mathrm{cyc}(\sigma)$ is the number of cycles of $\sigma$. 
It is an open problem to find a combinatorial interpretation for
\eqref{eq:alpha} generalizing both \eqref{eq:alpha=0} and \eqref{eq:q=1}.

In this paper we found a sign-reversing involution proving \eqref{eq:alpha=0}.
Unfortunately, our approach does not apply directly to \eqref{eq:q=1}. If
$y=q=1$, then a simple combinatorial proof of \eqref{eq:q=1} was found by Foata
and Zeilberger \cite{FZ84}. This was also proved by Kim and Zeng \cite{KZ} who
found a combinatorial interpretation for linearization coefficients of general
Sheffer polynomials using sign-reversing involutions. For the case $q=1$,
M\'edicis \cite{Med98} gave a combinatorial proof of a generalization of
\eqref{eq:q=1} to Meixner polynomials using sign-reversing involutions.

Finally, we note that there is a combinatorial interpretation of the
linearization coefficient $\LL(P_{n_1}(x)\dots P_{n_k}(x))$ for any orthogonal
polynomials in terms of weighted Motzkin paths due to de M\'edicis and Stanton
\cite{MedicisStanton}. Their result immediately implies the nonnegativity of
\eqref{eq:alpha}. It might be interesting to study the linearization
coefficients of the $q$-Laguerre polynomials considered here using their
combinatorial model.

\section*{Acknowledgments}

The authors would like to thank the anonymous referee for providing relevant references and useful comments.


\end{document}